\newtheorem{thm}{Theorem}[section]
      \newtheorem{lemma}[thm]{Lemma}
      \newtheorem{prop}[thm]{Proposition}
      \newtheorem{cor}[thm]{Corollary}
      \newtheorem{defn}[thm]{Definition}
      \newtheorem{examp}[thm]{Example}
      \newtheorem{rmk}[thm]{Remark}
      \newtheorem{Hypo}[thm]{Hypothesis}
      \numberwithin{equation}{section}
\title [Vector valued de Branges spaces]{Some aspects of vector valued de Branges spaces of entire functions}
\author[ Mahapatra]{Subhankar Mahapatra$^1$}
\address{
\begin{tabular}[t]{l}
	Department of Mathematics\\
	Indian Institute of Technology Ropar\\
	Rupnagar-140001\\
	India.
	\end{tabular}
	}
\author[Sarkar]{Santanu Sarkar$^2$}
\begin{document}
	
 \subjclass{ Primary: 46E22, 47A53; Secondary: 47A08, 47A48. }

\keywords{ de Branges operator, Fredholm operator valued entire functions, Factorizations, Isometric embedding, Associated function, Operator nodes\\
$1$ Email: subhankar.19maz0001@iitrpr.ac.in, subhankarmahapatra95@gmail.com\\
$2$ Corresponding author; Email: santanu@iitrpr.ac.in, santanu87@gmail.com.}

\begin{abstract}
\noindent This paper deals with certain aspects of the vector valued de Branges spaces of entire functions that are based on pairs of Fredholm operator valued functions. Some factorization and isometric embedding results are extended from the scalar valued theory of de Branges spaces. In particular, global factorization of Fredholm operator valued entire functions and analytic equivalence of reproducing kernels of de Branges spaces are discussed. Additionally, the operator valued entire functions associated with these de Branges spaces are studied, and a connection with the operator nodes is established.
\end{abstract}

\maketitle
\tableofcontents
\section{Introduction}
\label{Sec-1}
L. de Branges initiated the study of Hilbert spaces of scalar valued entire functions, which generalizes the renowned Paley-Wiener spaces. This theory was started with the article \cite{Branges e1} and gained recognition with the papers \cite{Branges e2}, \cite{Branges e3}, and many more. Since then, many mathematicians have contributed to this theory, and several generalizations have emerged. A comprehensive study, including numerous examples and applications of de Branges spaces of entire functions, can be found in \cite{Branges 4}. The  theory of de Branges spaces consisting of $\mathbb{C}^n$-valued entire functions has appeared greatly in the works of D. Arov and H. Dym. These spaces have played a pivotal role in their investigations of direct and inverse problems for canonical systems of differential equations and Dirac-Krein systems. An in-depth study of these de Branges spaces with matrix valued reproducing kernels in connection with the theory of $J$-contractive matrix valued analytic functions and multivariate prediction can be found in \cite{ArD08} and \cite{ArD18}, respectively.\\
The primary consideration of this paper is de Branges spaces of vector valued entire functions taking values in an infinite dimensional complex separable Hilbert space $\mathfrak{X}$. An initial discussion of de Branges spaces of these vector valued entire functions appeared in the work of L. de Branges and J. Rovnyak \cite{Rovnyak}. The main motivation was to formulate quantum scattering theory in terms of self-adjoint operators and analyze the structure of invariant subspaces in a scattering problem. Hilbert spaces introduced in \cite{Rovnyak} (Theorem $1$) can not be considered the vector generalization of de Branges spaces as they do not generalize Paley-Wiener spaces of vector valued entire functions. 
In a recent work \cite{Mahapatra}, we developed a vector valued generalization of de Branges spaces of entire functions, constructed using a pair of Fredholm operator valued functions. This framework extends both the classical Paley-Wiener spaces of vector valued entire functions and the approach presented in \cite{Rovnyak}. The article also establishes a connection between these generalized spaces and a class of entire operators with infinite deficiency indices introduced by M. G. Krein. Additionally, the study \cite{Mahapatra1} explores analytic Kramer sampling and quasi Lagrange-type interpolation in relation to these spaces, which may be of particular interest. Furthermore, another class of vector valued de Branges spaces constructed from J-contractive operator valued analytic functions and their application to a functional model problem originating from M. G. Krein has been recently investigated in \cite{Bharti}.\\ 
The main objective of this paper is to revive some problems from the theory of de Branges spaces of scalar valued entire functions to the vector valued de Branges spaces setting, which were not considered in \cite{Rovnyak}. We also briefly discuss the difference between de Branges spaces considered in \cite{Rovnyak} and \cite{Mahapatra}. In Theorem \ref{dB operator equality}, we investigate when two de Branges operators generate the same de Branges space. Conversely, in Theorem \ref{dB space equality}, we describe the relationship between their corresponding de Branges operators when the two de Branges spaces coincide. In Theorem $6.3$, we establish a sufficient condition for a $B(\mathfrak{X})$-valued entire function $S(z)$ to be an associated function of the de Branges space $\mathcal{B}(\mathfrak{E})$. Specifically, we show that if there exists $\alpha\in\mathbb{C}$ such that $S(\alpha)$ is invertible, and 
$$\frac{E_+^{-1}S}{\rho_i}u\in H^2_{\mathfrak{X}}(\mathbb{C}_+)~~\mbox{and}~~\frac{E_-^{-1}S}{\rho_{-i}}u\in H^2_{\mathfrak{X}}(\mathbb{C}_+)^\perp \hspace{.3cm}\mbox{for all}~u\in\mathfrak{X},$$
then $S$ is an associated function of $\mathcal{B}(\mathfrak{E})$. Moreover, in Theorem $6.4$, we show that the sufficient condition in Theorem $6.3$ is also necessary, provided there exists $\alpha\in\mathbb{C}$ such that $K_\alpha(\alpha)$ is invertible.\\
Here, we briefly outline the contents of this paper. We recall some preliminary results in section \ref{Sec-2}, which will be helpful in the subsequent sections. In section \ref{Sec-3}, we consider vector valued de Branges spaces and elaborate on several results of de Branges operators. Also, the vector version of problem $45$ from \cite{Branges 4} is discussed. A global factorization of Fredholm operator valued entire functions, invertible at least at one point, are studied in section \ref{Sec-4}, which connects the two de Branges spaces considered in \cite{Rovnyak} and \cite{Mahapatra}. This factorization also provides a conclusion regarding the analytic equivalence of reproducing kernels of de Branges spaces. Section \ref{Sec-5} discusses problem $44$ from \cite{Branges 4} in the vector valued de Branges spaces setting, using the global factorization discussed in the previous section. In section \ref{Sec-6}, we study operator valued entire functions associated with vector valued de Branges spaces and discuss their connection with the operator of multiplication by the independent variable. Finally, section \ref{Sec-7} connects de Branges spaces with the operator nodes.\\
The following notations will be used throughout the paper:\\
$\mathbb{C}_+$ (resp. $\mathbb{C}_-$) denotes the open complex upper (resp. lower) half-plane. $\rho_w(z)=-2\pi i(z-\overline{w})$ for $z,w\in\mathbb{C}$. $\mathfrak{X}$ is a complex separable Hilbert space, and $B(\mathfrak{X})$ is the collection of all bounded linear operators on $\mathfrak{X}$. $I$ denote the identity operator on $\mathfrak{X}$. If $F\in B(\mathfrak{X})$, then $\ker F$, $\mathrm{rng}~F$, and $F^*$ denote the kernel, range, and adjoint of $F$ respectively and
$$Z_F=\{z\in\mathbb{C}:(I-zF)^{-1}\in B(\mathfrak{X})\}.$$
$\Phi(\mathfrak{X})$ denotes the collection of all Fredholm operators on $\mathfrak{X}$, i.e., it is the collection of all closed range bounded linear operators $F$ on $\mathfrak{X}$ such that $\dim(\ker F)<\infty$ and $\dim(\ker F^*)<\infty$. The index of a Fredholm operator $F$ is denoted as $\mathrm{ind} (F)$, and it is an integer obtained by the following relation 
\begin{equation}
 \mathrm{ind} (F)=\dim(\ker F)-\dim(\ker F^*).\label{Index Eq}
 \end{equation}
Since $\ker F^*=(\mathrm{rng}~F)^\perp$, the above relation can be written as
\begin{equation}
\mathrm{ind} (F)=\dim(\ker F)-\dim((\mathrm{rng}~F)^\perp).\label{Index relation}
\end{equation}
Details about Fredholm operators and several applications can be found in \cite{Gohberg1} and \cite{Kato}. Let $\mathcal{H}$ be a reproducing kernel Hilbert space of $\mathfrak{X}$-valued entire functions. For any $\alpha\in\mathbb{C}$, we denote $\mathcal{H}_\alpha=\{f\in\mathcal{H}:f(\alpha)=0\}$. $\mathcal{H}_\alpha$ is a closed subspace of $\mathcal{H}$. The multiplication operator in $\mathcal{H}$ is denoted as $\mathfrak{T}$ and is defined as 
$$\mathfrak{T}f(z)=zf(z)\hspace{.3cm}\mbox{for all}~z\in\mathbb{C}.$$
For any $\alpha\in\mathbb{C}$ and $f$ in a suitable space of $\mathfrak{X}$-valued entire functions, the generalized backward shift operator is denoted as $R_\alpha$ and is defined as 
\begin{equation}
    (R_\alpha f)(z) := \left\{
    \begin{array}{ll}
         \frac{f(z)-f(\alpha)}{z-\alpha}  & \mbox{if } z \neq \alpha \\
         f'(\alpha) & \mbox{if } z = \alpha.
    \end{array} \right.
\end{equation}
We define the operator matrix $\mathcal{I}$ on $\mathfrak{X}\oplus\mathfrak{X}$ as
$$\mathcal{I}=
\begin{bmatrix}
    I  & 0 \\
    0  & -I \\
\end{bmatrix}.
$$
\section{Preliminaries}
\label{Sec-2}
This section briefly recalls some basic spaces of vector valued holomorphic functions, which will be useful in the subsequent sections. A Hilbert space $\mathcal{H}$ of $\mathfrak{X}$-valued entire functions is said to be a reproducing kernel Hilbert space (RKHS) if there exists a function $K_w(z):\mathbb{C}\times\mathbb{C}\to B(\mathfrak{X})$ such that for all $f\in\mathcal{H}$, $w\in\mathbb{C}$ and $u\in\mathfrak{X}$ the following hold:
$$K_wu\in\mathcal{H}\hspace{.3cm}\mbox{and}\hspace{.3cm}\langle f,K_wu\rangle_\mathcal{H}=\langle f(w),u\rangle_\mathfrak{X}.$$
The unique operator valued function $K_w(z)$ is called the reproducing kernel of $\mathcal{H}$. If $\delta_z$ denotes the point evaluation linear operator on $\mathcal{H}$ at the point $z$, i.e., for $f\in\mathcal{H}$, $\delta_z(f)=f(z)$, then $K_w(z)$ can be written as $K_w(z)=\delta_z\delta_w^*$, where $\delta_w^*$ is the adjoint of $\delta_w$. Also, the reproducing kernel of $\mathcal{H}$ is positive. The interpretation of positivity of the reproducing kernel is as follows: for any $p\in\mathbb{N}$, $z_1, z_2,\ldots, z_p\in\mathbb{C}$ and $u_1, u_2, \ldots, u_p\in\mathfrak{X}$,
 $$\sum_{i,j=1}^p \bigg\langle K_{z_j}(z_i)u_j,u_i \bigg\rangle_\mathfrak{X}=\left\lVert\sum_{i=1}^p\delta_{z_i}^*(u_i)\right\rVert_\mathcal{H}^2\geq 0.$$
For a comprehensive study of reproducing kernel Hilbert spaces, see \cite{Paulsen}. A $B(\mathfrak{X})$-valued entire function $S(z)$ is said to be associated to the reproducing kernel Hilbert space $\mathcal{H}$ if $S(\alpha)$ is invertible for some $\alpha\in\mathbb{C}$ and
 \begin{equation}
\frac{f(z)-S(z)S(\alpha)^{-1}f(\alpha)}{z-\alpha}\in\mathcal{H}\hspace{.5cm}\mbox{for all}~f\in\mathcal{H}.
\end{equation}
\begin{lemma}
Let $\mathcal{H}$ be a RKHS of $\mathfrak{X}$-valued entire functions with reproducing kernel $K$. Suppose for some $\alpha\in\mathbb{C}$, $K_\alpha(\alpha)$ is invertible. Then 
\begin{enumerate}
\item
$\mathrm{rng}~K_\alpha(\alpha)=\{f(\alpha):f\in\mathcal{H}\}=\mathfrak{X}.$
\item $S(z)=K_\alpha(z)$ is associated to $\mathcal{H}$ if and only if $R_\alpha\mathcal{H}_\alpha\subseteq \mathcal{H}$, where $\mathcal{H}_\alpha=\{f\in\mathcal{H}:f(\alpha)=0\}$.
\end{enumerate}
\end{lemma}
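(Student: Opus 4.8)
The plan is to treat the two assertions separately, relying on the factorization $K_\alpha(\alpha)=\delta_\alpha\delta_\alpha^*$ and on the elementary reproducing-kernel identity $(K_\alpha u)(z)=K_\alpha(z)u$, valid for every $u\in\mathfrak{X}$.

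For (1), I would first record the inclusion $\mbox{rng}~K_\alpha(\alpha)\subseteq\{f(\alpha):f\in\mathcal{H}\}$, which needs no extra hypothesis: for $u\in\mathfrak{X}$ the function $K_\alpha u$ belongs to $\mathcal{H}$ and $K_\alpha(\alpha)u=(K_\alpha u)(\alpha)$ is a point value of an element of $\mathcal{H}$. The reverse inclusion is precisely where the invertibility of $K_\alpha(\alpha)$ enters: as an invertible operator on $\mathfrak{X}$ its range is all of $\mathfrak{X}$, while $\{f(\alpha):f\in\mathcal{H}\}$ is visibly a subset of $\mathfrak{X}$, so the two sets must coincide (both equal $\mathfrak{X}$, and in particular point evaluation at $\alpha$ is onto).

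For (2), I would substitute $S(z)=K_\alpha(z)$ into the defining condition for an associated function. Since $S(\alpha)=K_\alpha(\alpha)$ is invertible by hypothesis, $S$ is a legitimate candidate and the condition reads: for every $f\in\mathcal{H}$,
\[
\frac{f(z)-K_\alpha(z)K_\alpha(\alpha)^{-1}f(\alpha)}{z-\alpha}\in\mathcal{H}.
\]
The key manipulation is to absorb the second term of the numerator into $\mathcal{H}$. Setting $u:=K_\alpha(\alpha)^{-1}f(\alpha)\in\mathfrak{X}$, the element $K_\alpha u\in\mathcal{H}$ satisfies $(K_\alpha u)(z)=K_\alpha(z)u$ and $(K_\alpha u)(\alpha)=K_\alpha(\alpha)u=f(\alpha)$. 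Hence $g:=f-K_\alpha u$ lies in $\mathcal{H}$ with $g(\alpha)=0$, i.e.\ $g\in\mathcal{H}_\alpha$, and because $g(\alpha)=0$ the quotient above is precisely $(R_\alpha g)(z)$. Thus the associated-function condition is the assertion that $R_\alpha g\in\mathcal{H}$ for every $g$ produced in this way.

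Finally I would check that the correspondence $f\mapsto g=f-K_\alpha u$ carries $\mathcal{H}$ onto $\mathcal{H}_\alpha$: each resulting $g$ lies in $\mathcal{H}_\alpha$, and conversely any $h\in\mathcal{H}_\alpha$ is realized by taking $f=h$, since then $f(\alpha)=0$ forces $u=0$ and $g=h$. Consequently the statement ``the quotient lies in $\mathcal{H}$ for all $f\in\mathcal{H}$'' is word for word the inclusion $R_\alpha\mathcal{H}_\alpha\subseteq\mathcal{H}$, yielding both implications at once. The only point demanding attention is this surjectivity of $f\mapsto g$ onto $\mathcal{H}_\alpha$ together with the identification of the difference quotient with $R_\alpha g$; once those observations are in place the equivalence follows with no genuine obstacle beyond routine verification.
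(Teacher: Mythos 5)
Your proposal is correct and follows essentially the same route as the paper: both inclusions in (1) are argued exactly as in the paper (invertibility gives $\mathrm{rng}\,K_\alpha(\alpha)=\mathfrak{X}$, and $K_\alpha u\in\mathcal{H}$ gives the other containment), and your treatment of (2) is just a fleshed-out version of the paper's one-line observation that $f(\cdot)-S(\cdot)S(\alpha)^{-1}f(\alpha)\in\mathcal{H}_\alpha$ for all $f\in\mathcal{H}$, with the quotient identified as $R_\alpha$ applied to that element. The surjectivity remark ($h\in\mathcal{H}_\alpha$ arises from $f=h$) is a useful explicit detail that the paper leaves implicit, but it is the same argument.
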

\begin{proof}
Since $K_\alpha(\alpha)$ is invertible, $\{f(\alpha):f\in\mathcal{H}\}\subseteq \mathrm{rng}~K_\alpha(\alpha)=\mathfrak{X}$. Also, for every $u\in\mathfrak{X}$ as $K_\alpha u\in\mathcal{H}$, we have
$$\mathrm{rng}~K_\alpha(\alpha)=\{K_\alpha(\alpha)u:u\in\mathfrak{X}\}\subseteq \{f(\alpha):f\in\mathcal{H}\}.$$
This proves $(1)$. Now, to prove $(2)$, observe the fact that $f(\cdot)-S(\cdot)S(\alpha)^{-1}f(\alpha)\in\mathcal{H}_\alpha$ for all $f\in\mathcal{H}$.
\end{proof}
$L^2_\mathfrak{X}(\mathbb{R})$ denotes the collection of all $\mathfrak{X}$-valued, weakly measurable, square integrable functions on $\mathbb{R}$. $L^2_\mathfrak{X}(\mathbb{R})$ is a Hilbert space with respect to the following inner product:
$$\langle f,g\rangle=\int_{-\infty}^\infty \langle f(t),g(t)\rangle_\mathfrak{X}~dt\hspace{.5cm}\mbox{for all}~f,g\in L^2_\mathfrak{X}(\mathbb{R}).$$
The Hardy Hilbert space of $\mathfrak{X}$-valued analytic functions on the upper half-plane (resp. lower half-plane) is denoted as $H^2_\mathfrak{X}(\mathbb{C}_+)$ (resp. $H^2_\mathfrak{X}(\mathbb{C}_-)$).
Both $H^2_\mathfrak{X}(\mathbb{C}_+)$ and $H^2_\mathfrak{X}(\mathbb{C}_-)$ are reproducing kernel Hilbert spaces with the corresponding reproducing kernels
$$K_\xi(z)=\frac{I}{\rho_\xi(z)}\hspace{.2cm}\xi,z\in\mathbb{C}_+\hspace{.5cm}\mbox{and}\hspace{.5cm}K_\alpha^{(-)}(\lambda)=-\frac{I}{\rho_\alpha(\lambda)}\hspace{.2cm}\alpha,\lambda\in\mathbb{C}_-.$$

\begin{lemma}
\label{P45}
Suppose $f\in H^2_\mathfrak{X}(\mathbb{C}_+)$ and $\alpha\in\mathbb{C}$ are such that $f$ is holomorphic at $\alpha$, then 
$$\frac{f}{\rho_i}~\mbox{and}~ R_\alpha f~\mbox{belong to}~ H^2_\mathfrak{X}(\mathbb{C}_+).$$
Similarly, suppose $g\in H^2_\mathfrak{X}(\mathbb{C}_-)$ and $\beta\in\mathbb{C}$ are such that $g$ is holomorphic at $\beta$,  then 
$$\frac{g}{\rho_{-i}}~\mbox{and}~ R_\beta g~\mbox{belong to}~ H^2_\mathfrak{X}(\mathbb{C}_-).$$
\end{lemma}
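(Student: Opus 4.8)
The plan is to verify both assertions through the standard characterization of the Hardy space: an $\mathfrak{X}$-valued function $F$ that is analytic on $\mathbb{C}_+$ belongs to $H^2_{\mathfrak{X}}(\mathbb{C}_+)$ exactly when $\sup_{y>0}\int_{\mathbb{R}}\|F(x+iy)\|_{\mathfrak{X}}^2\,dx<\infty$, with this supremum being $\|F\|^2$. I would first dispose of the multiplier statement, which is the easy half, and then treat the difference quotient.

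For $f/\rho_i$, recall $\rho_i(z)=-2\pi i(z+i)$, whose only zero is at $z=-i\in\mathbb{C}_-$, so $1/\rho_i$ is analytic on $\mathbb{C}_+$. Writing $z=x+iy$ with $y>0$ gives $|z+i|^2=x^2+(y+1)^2\geq 1$, whence $|1/\rho_i(z)|\leq (2\pi)^{-1}$ throughout $\mathbb{C}_+$; that is, $1/\rho_i\in H^\infty(\mathbb{C}_+)$. Consequently $f/\rho_i$ is analytic on $\mathbb{C}_+$ and $\int_{\mathbb{R}}\|f(x+iy)/\rho_i(x+iy)\|^2\,dx\leq(2\pi)^{-2}\int_{\mathbb{R}}\|f(x+iy)\|^2\,dx\leq(2\pi)^{-2}\|f\|^2$ for every $y>0$. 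Taking the supremum places $f/\rho_i$ in $H^2_{\mathfrak{X}}(\mathbb{C}_+)$.

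For $R_\alpha f$ the first observation is that the difference quotient has a removable singularity at $z=\alpha$: since $f$ is holomorphic at $\alpha$, the function $R_\alpha f$ is holomorphic on $\mathbb{C}_+$ together with a neighborhood of $\alpha$. I would choose $r>0$ small enough that $f$, and hence $R_\alpha f$, is holomorphic on $D(\alpha,2r)$ (automatic when $\alpha\in\mathbb{C}_+$, and guaranteed by the hypothesis otherwise), and set $M=\sup_{\overline{D(\alpha,r)}}\|R_\alpha f\|<\infty$. Then, for fixed $y>0$, I estimate $\int_{\mathbb{R}}\|R_\alpha f(x+iy)\|^2\,dx$ by splitting the horizontal line into the part lying in $D(\alpha,r)$ and its complement. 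On the portion inside the disc the integrand is at most $M^2$ and the $x$-interval has length at most $2r$, contributing at most $2rM^2$. Off the disc we have $|z-\alpha|\geq r$, so $\|R_\alpha f(z)\|\leq r^{-1}(\|f(z)\|+\|f(\alpha)\|)$, and using $\|a-b\|^2\leq 2\|a\|^2+2\|b\|^2$ the complementary integral is at most $2r^{-2}\|f\|^2+2r^{-2}\|f(\alpha)\|^2\int_{\{x:\,|x+iy-\alpha|\geq r\}}|x+iy-\alpha|^{-2}\,dx$. Writing $u=x-\mathrm{Re}\,\alpha$ and $d=|y-\mathrm{Im}\,\alpha|$, the last integral equals $\int_{\{u^2+d^2\geq r^2\}}(u^2+d^2)^{-1}\,du$, which is bounded by a constant depending only on $r$, uniformly in $y\geq 0$. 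Combining the two pieces yields $\sup_{y>0}\int_{\mathbb{R}}\|R_\alpha f(x+iy)\|^2\,dx<\infty$, so $R_\alpha f\in H^2_{\mathfrak{X}}(\mathbb{C}_+)$.

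The statements for $g\in H^2_{\mathfrak{X}}(\mathbb{C}_-)$ follow by the identical reasoning on the lower half-plane, with $\rho_i$ replaced by $\rho_{-i}(z)=-2\pi i(z-i)$, whose zero lies at $i\in\mathbb{C}_+$, so that $1/\rho_{-i}\in H^\infty(\mathbb{C}_-)$. The only genuinely delicate point is the behaviour of $R_\alpha f$ near $z=\alpha$: precisely because $1/(z-\alpha)$ is singular on the horizontal line through $\alpha$, any crude bound there diverges, and it is the holomorphy hypothesis at $\alpha$ that rescues the estimate by rendering $R_\alpha f$ bounded on a neighborhood of $\alpha$. I expect this local control, together with the uniform-in-$y$ bound on the elementary integral over the far region, to be the main technical point of the argument.
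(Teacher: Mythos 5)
Your proof is correct, but it takes a genuinely different route from the paper. The paper gives no argument at all for this lemma: it simply cites Lemma 3.14 of \cite{ArD18}, whose argument (as the paper itself indicates later, when it reuses it in the vector version of Problem 45) is based on the Cauchy integral formula and the membership $\frac{1}{z+i}\in H^2(\mathbb{C}_+)$. You instead verify everything directly from the characterization $\sup_{y>0}\int_{\mathbb{R}}\|F(x+iy)\|^2\,dx<\infty$: the multiplier claim via $1/\rho_i\in H^\infty(\mathbb{C}_+)$, and the difference-quotient claim by splitting each horizontal line into the piece inside a small disc $D(\alpha,r)$, where the removable singularity makes $R_\alpha f$ bounded, and the far piece, where $\|f(z)-f(\alpha)\|^2\le 2\|f(z)\|^2+2\|f(\alpha)\|^2$ together with the uniform-in-$y$ bound on $\int_{\{u^2+d^2\ge r^2\}}(u^2+d^2)^{-1}\,du$ (which is indeed at most a constant times $1/r$) closes the estimate. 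What your approach buys: it is self-contained and works verbatim for infinite-dimensional $\mathfrak{X}$, whereas the cited lemma is formulated in the $\mathbb{C}^p$-valued setting and the paper leaves the transfer implicit; what the paper's approach buys is brevity. One small slip to fix: your displayed bound for the complementary integral, $2r^{-2}\|f\|^2+2r^{-2}\|f(\alpha)\|^2\int_{\{|x+iy-\alpha|\ge r\}}|x+iy-\alpha|^{-2}\,dx$, carries a spurious factor $r^{-2}$ on the second term (you already retained $|z-\alpha|^{-2}$ inside that integral); the correct bound is $2r^{-2}\|f\|^2+2\|f(\alpha)\|^2\int_{\{|x+iy-\alpha|\ge r\}}|x+iy-\alpha|^{-2}\,dx$. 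This is harmless --- take $r\le 1$ or just correct the constant --- since all that matters is finiteness of the bound uniformly in $y$.
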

\begin{proof}
The proof follows from Lemma $3.14$ in \cite{ArD18}.
\end{proof}
 The collection of $B(\mathfrak{X})$-valued bounded analytic functions on the upper half-plane is denoted as $H^\infty_{B(\mathfrak{X})}(\mathbb{C}_+)$. This is a Banach space with respect to the following norm:
 $$||F||_\infty=\sup_{y>0}||F(x+iy)||_{B(\mathfrak{X})}<\infty,\hspace{.3cm}F\in H^\infty_{B(\mathfrak{X})}(\mathbb{C}_+).$$
 We denote the closed unit ball of $H^\infty_{B(\mathfrak{X})}(\mathbb{C}_+)$ as $\mathcal{S}$, the operator valued Schur class functions. For a comprehensive study of these spaces, including the existence of their unique boundary functions, see \cite{Rosenblum}. $\mathcal{S}^{in}$ (resp. $\mathcal{S}^{in}_*$) represents the collection of all elements of $\mathcal{S}$ such that their boundary functions are isometries (resp. co-isometries) a.e. on $\mathbb{R}$.\\
 Now, we recall some results about Fredholm operators and Fredholm operator valued analytic functions, which will be crucial for the forthcoming sections. 
 \begin{prop}
 \label{FP}
 Let $M,N\in B(\mathfrak{X})$ be such that $MN\in\Phi(\mathfrak{X})$. Then $M\in\Phi(\mathfrak{X})$ if and only if $N\in\Phi(\mathfrak{X})$.
 \end{prop}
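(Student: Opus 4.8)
The plan is to reduce the assertion to a one-line fact about invertibility in a unital algebra by passing to the Calkin algebra. Let $\mathcal{K}(\mathfrak{X})$ denote the ideal of compact operators on $\mathfrak{X}$ and let $\pi\colon B(\mathfrak{X})\to B(\mathfrak{X})/\mathcal{K}(\mathfrak{X})$ be the canonical quotient map onto the Calkin algebra. By Atkinson's theorem (see \cite{Gohberg1}, \cite{Kato}), an operator $F\in B(\mathfrak{X})$ is Fredholm if and only if $\pi(F)$ is invertible in $B(\mathfrak{X})/\mathcal{K}(\mathfrak{X})$. Since $\pi$ is a unital algebra homomorphism we have $\pi(MN)=\pi(M)\pi(N)$, and the hypothesis $MN\in\Phi(\mathfrak{X})$ says precisely that $\pi(M)\pi(N)$ is invertible.

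With this in hand both implications are immediate. If $N\in\Phi(\mathfrak{X})$, then $\pi(N)$ is invertible, so $\pi(M)=\big(\pi(M)\pi(N)\big)\pi(N)^{-1}$ is a product of invertible elements and hence invertible; by Atkinson's theorem $M\in\Phi(\mathfrak{X})$. Conversely, if $M\in\Phi(\mathfrak{X})$, then $\pi(M)$ is invertible, so $\pi(N)=\pi(M)^{-1}\big(\pi(M)\pi(N)\big)$ is invertible and therefore $N\in\Phi(\mathfrak{X})$. This settles the equivalence, the entire content being carried by Atkinson's characterization of Fredholm operators.

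If one prefers an elementary argument that avoids the Calkin algebra, one can argue by dimension counting. From $\ker N\subseteq\ker(MN)$ and $\mbox{rng}~(MN)\subseteq\mbox{rng}~M$ one reads off, using $MN\in\Phi(\mathfrak{X})$, that $\dim(\ker N)<\infty$ and $\dim(\mbox{rng}~M)^\perp<\infty$ hold automatically; thus it only remains to cross-relate $\dim(\ker M)$ with $\dim(\mbox{rng}~N)^\perp$. For the direction $N\in\Phi(\mathfrak{X})\Rightarrow M\in\Phi(\mathfrak{X})$, one checks that $x\in\ker M\cap\mbox{rng}~N$ forces $x=Ny$ with $y\in\ker(MN)$, so that $\ker M\cap\mbox{rng}~N\subseteq N(\ker(MN))$ is finite dimensional; since $\mbox{rng}~N$ has finite codimension, projecting $\ker M$ into $\mathfrak{X}/\mbox{rng}~N$ yields $\dim(\ker M)<\infty$. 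The reverse direction then follows by applying this same implication to the factorization $(MN)^*=N^*M^*$ together with the fact that $F\in\Phi(\mathfrak{X})$ if and only if $F^*\in\Phi(\mathfrak{X})$.

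The main obstacle is not the algebra but the input: everything hinges on Atkinson's theorem in the first route, and in the elementary route the delicate point is to justify that a Fredholm $N$ has closed range of finite algebraic codimension, so that the ``finite codimension'' step for $\ker M$ is legitimate. This is precisely where the Calkin-algebra formulation is cleaner, since closed range is already encoded in the invertibility of $\pi(N)$.
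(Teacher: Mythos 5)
Your proposal is correct, but there is no internal argument in the paper to measure it against: the paper does not prove Proposition \ref{FP} at all, it merely states it and refers the reader to \cite{Lay} (Taylor--Lay). Your Calkin-algebra argument is the standard self-contained proof and is complete: Atkinson's theorem converts Fredholmness of $MN$, $M$, $N$ into invertibility of $\pi(M)\pi(N)$, $\pi(M)$, $\pi(N)$ in $B(\mathfrak{X})/\mathcal{K}(\mathfrak{X})$, after which the equivalence is the one-line algebraic fact that if $ab$ and $b$ are invertible then so is $a=(ab)b^{-1}$, and symmetrically. One caveat is worth making explicit, precisely because your entire proof funnels through Atkinson's theorem: the paper's definition of $\Phi(\mathfrak{X})$ requires only $\dim(\ker F)<\infty$ and $\dim(\ker F^*)<\infty$, with no closed-range condition, whereas Atkinson's theorem characterizes Fredholmness in the standard sense (closed range together with finite-dimensional kernel and cokernel). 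Strictly read, then, your proof establishes the proposition for the standard notion; that is evidently what the paper intends (its index relation (\ref{Index relation}) and its use of Theorem \ref{FAT} presuppose it), but you should state which definition you are working with. Your fallback elementary argument is also essentially sound, and you correctly isolate its only delicate point, namely that a Fredholm $N$ has closed range of finite algebraic codimension so that the quotient step for $\ker M$ is legitimate; note in addition that the closedness of $\mbox{rng}~M$, which the standard definition also demands, comes for free because $\mbox{rng}~M$ contains the closed finite-codimensional subspace $\mbox{rng}~(MN)$, and your adjoint trick $(MN)^*=N^*M^*$ for the converse direction is clean.
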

 The proof of the above proposition and several results about Fredholm operators can be found in \cite{Lay} (Theorem $13.1$).
 \begin{thm}{(Fredholm analytic theorem)}
 \label{FAT}
Suppose $\Omega\subseteq \mathbb{C}$ is an open connected set and $F:\Omega\to \Phi(\mathfrak{X})$ is an analytic function. Then either $F(z)$ is not boundedly invertible for any $z\in\Omega$, or there exists a discrete subset $D$ of $\Omega$ such that $F(z)^{-1}\in\Phi(\mathfrak{X})$ for all $z\in\Omega\setminus D$, $F(\cdot)^{-1}$ is analytic on $\Omega\setminus D$ and meromorphic on $\Omega$.
 \end{thm}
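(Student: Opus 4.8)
The plan is to reduce the question of bounded invertibility of the operator-valued function $F$ to the vanishing of a scalar analytic determinant, and then to globalize by a connectedness argument. First I would record the standard fact that the Fredholm index $\mbox{ind}(F(z))$ is locally constant in $z$ (a perturbation result for Fredholm operators available in \cite{Kato}), so that on the connected set $\Omega$ it takes a single constant value. A boundedly invertible operator has trivial kernel and trivial cokernel, hence index $0$; therefore, if this constant value is nonzero, $F(z)$ fails to be boundedly invertible at every $z\in\Omega$, which is exactly the first alternative. From now on I assume $\mbox{ind}(F(z))\equiv 0$.

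Fix $z_0\in\Omega$ and set $n=\dim(\ker F(z_0))=\dim((\mbox{rng}~F(z_0))^\perp)$, the two being equal because the index is $0$. Writing $\mathfrak{X}=\ker F(z_0)\oplus\mathfrak{X}_1$ and $\mathfrak{X}=\mathfrak{X}_2\oplus\mbox{rng}~F(z_0)$ with $\mathfrak{X}_1=(\ker F(z_0))^\perp$ and $\dim\mathfrak{X}_2=n$, I would choose a finite-rank operator $K$ that vanishes on $\mathfrak{X}_1$ and maps $\ker F(z_0)$ isomorphically onto $\mathfrak{X}_2$. Then $F(z_0)+K$ maps $\mathfrak{X}_1$ bijectively onto $\mbox{rng}~F(z_0)$ and $\ker F(z_0)$ bijectively onto $\mathfrak{X}_2$, hence is a bijection of $\mathfrak{X}$ and so is boundedly invertible by the bounded inverse theorem. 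Putting $G(z)=F(z)+K$, the function $G$ is analytic, $G(z_0)$ is invertible, and since inversion is analytic on the open set of invertible operators, $G(z)^{-1}$ exists and is analytic on some neighbourhood $U$ of $z_0$ (which I take to be a disk). The factorization $F(z)=G(z)\bigl(I_\mathfrak{X}-G(z)^{-1}K\bigr)$ together with the invertibility of $G(z)$ shows that, on $U$, the bounded invertibility of $F(z)$ is equivalent to that of $I_\mathfrak{X}-G(z)^{-1}K$, a perturbation of the identity by the finite-rank analytic operator $G(z)^{-1}K$.

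Next I would linearize this finite-rank problem. Writing $K=\sum_{j=1}^n\langle\cdot,e_j\rangle f_j$ and $g_j(z)=G(z)^{-1}f_j$, the operator $I_\mathfrak{X}-G(z)^{-1}K$ is invertible precisely when the matrix $I_n-B(z)$ is invertible, where $B(z)=\bigl[\langle g_j(z),e_k\rangle\bigr]_{k,j}$: any kernel vector $x$ of $I_\mathfrak{X}-G(z)^{-1}K$ lies in the span of $g_1(z),\dots,g_n(z)$, and its coefficient vector is annihilated by $I_n-B(z)$, while conversely such a coefficient vector produces a nonzero kernel element because the $g_j(z)$ are independent. The entries of $B(z)$ are analytic on $U$, so $d(z)=\det(I_n-B(z))$ is analytic on $U$, and $F(z)$ is boundedly invertible at $z\in U$ if and only if $d(z)\neq 0$. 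Moreover the rank-$n$ inversion formula expresses $(I_\mathfrak{X}-G(z)^{-1}K)^{-1}$, and hence $F(z)^{-1}=(I_\mathfrak{X}-G(z)^{-1}K)^{-1}G(z)^{-1}$, as an analytic operator-valued expression divided by $d(z)$; thus $F(z)^{-1}$ is meromorphic on $U$ with poles confined to the zero set of $d$. This yields the local dichotomy: either $d\equiv 0$ on $U$ and $F$ is nowhere invertible on $U$, or $d\not\equiv 0$, its zeros are isolated, and $F(z)^{-1}$ is analytic off these zeros and meromorphic on $U$.

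Finally I would globalize. Let $W$ be the set of points admitting a neighbourhood of the first ($d\equiv 0$) type and $W'$ the set admitting one of the second ($d\not\equiv 0$) type. Both are open, they are disjoint, and the local dichotomy shows $W\cup W'=\Omega$; connectedness of $\Omega$ forces $\Omega=W$ or $\Omega=W'$. In the former case $F$ is boundedly invertible at no point of $\Omega$, the first alternative. In the latter case the non-invertibility set $D$ is closed in $\Omega$ (the invertible operators form an open set and $F$ is continuous) and locally equal to the discrete zero set of $d$, hence is a discrete subset of $\Omega$; on $\Omega\setminus D$ the inverse $F(z)^{-1}$ is analytic and, being boundedly invertible, lies in $\Phi(\mathfrak{X})$, while the local meromorphy statements patch to meromorphy on all of $\Omega$. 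The main obstacle I anticipate is the local finite-dimensional reduction of the third paragraph, namely constructing $K$, controlling the neighbourhood on which $G^{-1}$ is analytic, and reading off from the explicit rank-$n$ inverse that $F^{-1}$ is genuinely meromorphic rather than merely analytic on $\Omega\setminus D$; the connectedness patching is then routine.
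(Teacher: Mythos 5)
The paper contains no proof of this theorem to compare against: it is recorded in the Preliminaries as a known result, with the reader referred to \cite{Holden} and \cite{Gohberg1}. Judged on its own, your argument is correct, and it is essentially the classical proof of the analytic Fredholm theorem that appears in those references: local constancy of the Fredholm index disposes of the case $\mbox{ind}(F(z))\neq 0$; a finite-rank regularizer $K$ makes $G=F+K$ invertible near a given $z_0$; the factorization $F=G\,(I-G^{-1}K)$ reduces the problem to a finite-rank perturbation of the identity; the scalar determinant $d(z)=\det(I_n-B(z))$ detects invertibility and exhibits $F^{-1}$ locally as an analytic operator-valued expression divided by $d$, whence the meromorphy; and connectedness patches the local dichotomy into the global one. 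Two points are glossed but easily repaired. First, your kernel computation only proves that $I-G(z)^{-1}K$ is injective exactly when $I_n-B(z)$ is; passing from injectivity to bounded invertibility needs the Fredholm alternative for identity-minus-finite-rank operators, or equivalently the explicit rank-$n$ inversion formula you invoke in the next sentence, which settles surjectivity and the meromorphy simultaneously. Second, the determinant $d$ depends on the choice of $z_0$ and $K$, so the sets $W$ and $W'$ in the patching step should be defined by the intrinsic alternatives (``$F$ invertible at no point of some neighbourhood of $z$'' versus ``the non-invertibility set is discrete in some neighbourhood of $z$''); your phrasing effectively does this, and disjointness then follows because a nonempty open set cannot be discrete. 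With these readings the proof is complete and self-contained, which is more than the paper itself provides for this statement.
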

More details about this theorem and related topics can be found in \cite{Holden} (Theorem $3.3$) and \cite{Gohberg} (Section $4.1$).
\section{Vector valued de Branges spaces}
\label{Sec-3}
In this section, we recall the definition of vector valued de Branges spaces introduced in \cite{Mahapatra} and discuss several results related to de Branges operators. Let $E_+(z)$ and $E_-(z)$ be two $\Phi(\mathfrak{X})$-valued entire functions such that
\begin{enumerate}
\item  $E_+(z)$ and $E_-(z)$ are invertible for at least at one point, and
 \item $E_+^{-1}E_-\in\mathcal{S}^{in}\cap\mathcal{S}_*^{in}$.
 \end{enumerate}
Then the pair of $\Phi(\mathfrak{X})$-valued entire functions $\mathfrak{E}(z)=(E_-(z),E_+(z))$ is said to be a de Branges operator. Now, a de Branges operator $\mathfrak{E}(z)=(E_-(z),E_+(z))$ provides the following three relations between $E_+(z)$ and $E_-(z)$:
\begin{equation}
E_+(z)E_+(z)^*-E_-(z)E_-(z)^*\succeq 0\hspace{.5cm}\mbox{for all}~z\in\mathbb{C}_+,
\end{equation}
\begin{equation}
E_+(x)E_+(x)^*-E_-(x)E_-(x)^*=0\hspace{.5cm}\mbox{for all}~x\in\mathbb{R}
\end{equation}
and
\begin{equation}
\label{Equality}
E_+(z)E_+(\overline{z})^*-E_-(z)E_-(\overline{z})^*=0\hspace{.5cm}\mbox{for all}~z\in\mathbb{C}.
\end{equation}
Note that the set of Fredholm operators $\Phi(\mathfrak{X})$ is open in $B(\mathfrak{X})$ and the index function is constant in each connected component (see corollary $19.1.6$ in \cite{Hörmander}). Now, since $E_+(z)$ and $E_-(z)$ are Fredholm operator valued entire functions, condition $(1)$ above implies that
\begin{equation}
\mathrm{ind}(E_+(z))=\mathrm{ind}(E_-(z))=0\hspace{.5cm}\mbox{for all}~z\in\mathbb{C}.\label{Index}
\end{equation}
We consider the kernel 
\begin{equation}
K_w(z):= \left\{
    \begin{array}{ll}
         \frac{E_+(z)E_+(w)^*-E_-(z)E_-(w)^*}{\rho_w(z)}  & \mbox{if } z \neq \overline{w} \\
         \frac{E_+^{'} (\overline{w})E_+(w)^*- E_-^{'}(\overline{w})E_-(w)^*}{-2\pi i} & \mbox{if } z = \overline{w}
    \end{array} \right.\label{de Branges kernel}
\end{equation}
corresponding to a de Branges operator $\mathfrak{E}(z)=(E_-(z),E_+(z))$ on $\mathbb{C}\times\mathbb{C}$. The kernel $K_w(z)$ is positive on $\mathbb{C}\times\mathbb{C}$, and we denote the corresponding unique reproducing kernel Hilbert space of $\mathfrak{X}$-valued entire functions as $\mathcal{B}(\mathfrak{E})$.
Another representation of $\mathcal{B}(\mathfrak{E})$ in terms of Hardy spaces is as follows:
\begin{equation}
\mathcal{B}(\mathfrak{E})=\{f:\mathbb{C}\to\mathfrak{X}~\mbox{entire function}~|~ E_+^{-1}f\in H^2_\mathfrak{X}(\mathbb{C}_+),~E_-^{-1}f\in H^2_\mathfrak{X}(\mathbb{C}_+)^\perp\}.
\end{equation}
The proof of the above observation is similar to Theorem $3.10$ in \cite{ArD18}. The norm and inner product defined on $\mathcal{B}(\mathfrak{E})$ are respectively as follows:
\begin{equation}
||f||^2:=\int_{-\infty}^\infty ||E_+^{-1}(t)f(t)||_\mathfrak{X}^2~dt<\infty \label{N1}
\end{equation}
and 
\begin{equation}
\langle f,g \rangle :=\int_{-\infty}^\infty\langle E_+^{-1}(t)f(t),E_+^{-1}(t)g(t)\rangle_\mathfrak{X}~dt, \label{In1}
\end{equation}
where $f,g\in \mathcal{B}(\mathfrak{E})$. The following example of de Branges spaces of vector valued entire functions is actually vector valued Paley-Wiener spaces; more examples of vector valued de Branges spaces can be found in \cite{Mahapatra}. Recall that for any $a>0$, the Paley-Wiener space of $\mathfrak{X}$-valued entire functions is defined as the Hilbert space
$$\{\hat{f}:f~ \mbox{is square integrable and vanishes outside the interval}~ [-a,a]\},$$ 
where $\hat{f}$ denotes the Fourier transformation of $f$. The inner product on this space is given by
$$\langle \hat{f},\hat{g}\rangle=\int_{-\infty}^\infty \langle \hat{f}(t),\hat{g}(t)\rangle_\mathfrak{X}dt.$$
Moreover, the associated reproducing kernel for Paley-Wiener space is
\begin{equation}
\label{Paley}
K_w^a(z)=\frac{\sin(z-\overline{w})a}{\pi(z-\overline{w})}I\hspace{.3cm}\mbox{for all}~w,z\in\mathbb{C}.
\end{equation}
\begin{examp}
Suppose $d$ is any positive real number. Then the de Branges operator $\mathfrak{E}(z)=(E_-(z),E_+(z))$, where 
$$E_+(z)=e^{-i z d}I\hspace{.3cm}\mbox{and}\hspace{.3cm}E_-(z)=e^{i z d}I\hspace{.3cm}\mbox{for all}~z\in\mathbb{C},$$
generates the vector valued Paley-Wiener space having the reproducing kernel $K_w^d(z)$ for all $w,~z\in\mathbb{C}$, as in $(\ref{Paley})$.

\end{examp}
\begin{rmk}
Recall that the de Branges spaces discussed in \cite{Rovnyak} use entire operator valued functions $E_+(z)$ and $E_-(z)$ such that $I-E_+(z)$ and $I-E_-(z)$ are compact for all $z\in\mathbb{C}$. If we stick to this consideration, the previous example, i.e., the Paley-Wiener spaces of vector valued entire functions could not be a de Branges space as whenever $1- e^{-i z d}\neq 0$, $I-E_+(z)$ can not be compact and similarly whenever $1- e^{i z d}\neq 0$, $I-E_-(z)$ can not be compact. In our consideration, the components $E_+(z)$, $E_-(z)$ are considered from a broader class of operator valued entire functions, namely the class of all Fredholm operator valued entire functions. This relaxation is allowing a wider range of spaces to qualify as de Branges spaces. 
\end{rmk}
The following theorem, which is a slightly modified version of Theorem $8.2$ from \cite{Mahapatra}, provides a characterization of the space $\mathcal{B}(\mathfrak{E})$. A similar characterization of de Branges spaces of $\mathbb{C}^n$-valued entire functions and vector valued holomorphic functions on a domain $\Omega\subseteq\mathbb{C}$ can be found in Theorem $7.1$ of \cite{JFA} and Theorem $4.1$ of \cite{Bharti}, respectively.
\begin{thm}
\label{Characterization}
Let $\mathcal{H}$ be a RKHS of $\mathfrak{X}$-valued entire functions with $B(\mathfrak{X})$-valued RK $K_w(z)$ and suppose $\beta\in\mathbb{C}_+$ be such that
$$K_\beta(z),K_{\overline{\beta}}(z)\in\Phi(\mathfrak{X})\hspace{.4cm}\mbox{for all}~z\in\mathbb{C}$$
and
$$K_\beta(\beta),~K_{\overline{\beta}}(\overline{\beta})\hspace{.4cm}\mbox{both are invertible}.$$
Then the RKHS $\mathcal{H}$ coincides with a de Branges space $\mathcal{B}(\mathfrak{E})$ isometrically if and only if
\begin{enumerate}
\item $R_\beta\mathcal{H}_\beta\subseteq \mathcal{H}$, $R_{\overline{\beta}}\mathcal{H}_{\overline{\beta}}\subseteq \mathcal{H}$, and
\item $(\mathfrak{T}-\overline{\beta}I)R_\beta:\mathcal{H}_\beta\to\mathcal{H}_{\overline{\beta}}$ is an isometric isomorphism.
\end{enumerate}
Moreover, in this case, the de Branges operator $\mathfrak{E}(z)=(E_-(z),E_+(z))$ takes the following form
\begin{equation}
\label{E_+}
E_+(z)=\rho_\beta(z)\rho_\beta(\beta)^{-\frac{1}{2}}K_\beta(z)K_\beta(\beta)^{-\frac{1}{2}},
\end{equation}
and 
\begin{equation}
\label{E_-}
E_-(z)=-\rho_{\overline{\beta}}(z)\rho_\beta(\beta)^{-\frac{1}{2}}K_{\overline{\beta}}(z)K_{\overline{\beta}}(\overline{\beta})^{-\frac{1}{2}}.
\end{equation}
\end{thm}
\begin{proof}
The proof of this theorem follows similarly to the proof of Theorem $8.2$ in \cite{Mahapatra} except for the necessary part where we need to prove $E_+(\beta)$ and $E_-(\overline{\beta})$ both are surjective.
Indeed, if the RKHS $\mathcal{H}$ satisfies the two sufficient conditions $(1)$ and $(2)$, and if we define $E_+(z)$ and $E_-(z)$ as in $(\ref{E_+})$ and $(\ref{E_-})$, respectively, then $E_+(z)$ and $E_-(z)$ are $\Phi(\mathfrak{X})$-valued entire functions, and $E_+(\beta)=\rho_\beta(\beta)^{\frac{1}{2}}K_\beta(\beta)^{\frac{1}{2}}$ and $E_-(\overline{\beta})=\rho_\beta(\beta)^{\frac{1}{2}}K_{\overline{\beta}}(\overline{\beta})^{\frac{1}{2}}$ both of which are invertible. For $z\neq \overline{w}$, we compute
$$K_w(z)=\frac{E_+(z)E_+(w)^*-E_-(z)E_-(w)^*}{\rho_w(z)},$$
which implies 
$$E_+(z)E_+(z)^*-E_-(z)E_-(z)^*=\rho_z(z)K_z(z)\succeq 0\hspace{.3cm}\mbox{for all}~z\in\mathbb{C}_+,$$
and
$$E_+(z)E_+(z)^*-E_-(z)E_-(z)^*=0\hspace{.3cm}\mbox{for all}~z\in\mathbb{R}.$$
Thus, $E_+^{-1}E_-\in\mathcal{S}^{in}\cap\mathcal{S}_*^{in}$, and consequently, $\mathfrak{E}(z)=(E_-(z),E_+(z))$ is a de Branges operator. Since the reproducing kernels of $\mathcal{H}$ and $\mathcal{B}(\mathfrak{E})$ coincide, we conclude that $\mathcal{H}$ is the de Branges space $\mathcal{B}(\mathfrak{E})$.\\
Conversely, assume that the RKHS $\mathcal{H}$ is a de Branges space $\mathcal{B}(\mathfrak{E})$. Since both $K_\beta(\beta)$ and $K_{\overline{\beta}}(\overline{\beta})$ are invertible, it follows that $E_+(\beta)^*$ and $E_-(\overline{\beta})^*$ are injective. Now $E_+(\beta)$ and $E_-(\overline{\beta})$ are also injective follows from  $(\ref{Index Eq})$. This implies that $E_+(\beta)$ and $E_-(\overline{\beta})$ are surjective. Notice that here, we do not insist that $E_+(\beta)$ and $E_-(\overline{\beta})$ are both self-adjoint operators; instead, we are using the fact that they are both Fredholm operators with index zero. The rest of the proof follows from the necessary part of Theorem $8.2$ in \cite{Mahapatra}.
\end{proof}
The following two theorems discuss the fact that corresponding to different de Branges operators, we can get the same de Branges space. This situation is characterized by the $\mathcal{I}$-unitary operator matrix on $\mathfrak{X}\oplus\mathfrak{X}$. A similar discussion for the matrix case can be found in \cite{ArD18} (Chapter $3.2$). Suppose 
\begin{equation}
\label{Operator matrix}
U=
\begin{bmatrix}
    U_{11}  & U_{12} \\
    U_{21}  & U_{22} \\
\end{bmatrix}
\end{equation}
be an operator matrix such that all its entries belong to $B(\mathfrak{X})$. 
\begin{defn}
Let $\mathfrak{H}$ be a Hilbert space and $J\in B(\mathfrak{H})$ be a signature operator, i.e., $$J=J^*=J^{-1}.$$ An operator $A\in B(\mathfrak{H})$ is called $J$-unitary if it satisfies 
\begin{equation}
\label{uni}
AJA^*=A^*JA=J.
\end{equation}
\end{defn}
Since $\mathcal{I}$ is a signature operator on $\mathfrak{X}\oplus\mathfrak{X}$, $U$ is a $\mathcal{I}$-unitary operator matrix if it satisfies $(\ref{uni})$. The following lemma gives some insightful information about the entries of an $\mathcal{I}$-unitary operator matrix. The proof of this lemma follows from \cite{Alpay1} (Lemma $5.2$). 
\begin{lemma}
\label{Unitary lemma}
If $U$ is an $\mathcal{I}$-unitary operator matrix as in $(\ref{Operator matrix})$, the following assertions are true:
\begin{enumerate}
\item $U_{11}$ and $U_{22}$ are invertible operators.
\item $||U_{12}U_{22}^{-1}||<1$, $||U_{21}U_{11}^{-1}||<1$, $||U_{11}^{-1}U_{12}||<1$ and $||U_{22}^{-1}U_{21}||<1$.
\end{enumerate}
\end{lemma} 
Now, we mention an immediate corollary of the previous lemma and Theorem $3$ in \cite{Harris}, which gives a factorization of the $\mathcal{I}$-unitary operator matrix $U$.
\begin{cor}
If $U$ is an $\mathcal{I}$-unitary operator matrix as in $(\ref{Operator matrix})$, and
\[A=
\begin{bmatrix}
    (I-K^*K)^{-\frac{1}{2}}  & K^*(I-KK^*)^{-\frac{1}{2}} \\
    (I-KK^*)^{-\frac{1}{2}}K  & (I-KK^*)^{-\frac{1}{2}} \\
\end{bmatrix}
\]
 then the following factorizations hold:
\begin{enumerate}
\item $U=LA$, where $K=U_{22}^{-1}U_{21}$ and
$$L=
\begin{bmatrix}
    (U_{11}-U_{12}K)(I-K^*K)^{-\frac{1}{2}}  & 0 \\
    0  & U_{22}(I-KK^*)^{\frac{1}{2}} \\
\end{bmatrix}.
$$
\item $U=AR$, where $K=U_{21}U_{11}^{-1}$ and
$$R=
\begin{bmatrix}
    (I-K^*K)^{\frac{1}{2}}U_{11}  & 0 \\
    0  & (I-KK^*)^{-\frac{1}{2}}(U_{22}-KU_{12}) \\
\end{bmatrix}.
$$
\end{enumerate}
\end{cor}
In the following two theorems, we discuss conditions under which the de Branges spaces $\mathcal{B}(\mathfrak{E})$ and $\mathcal{B}(\mathfrak{F})$, corresponding to two different de Branges operators $\mathfrak{E}$ and $\mathfrak{F}$, respectively, coincide isometrically.
\begin{thm}
\label{dB operator equality}
Let $\mathcal{B}(\mathfrak{E})$ be a de Branges space of $\mathfrak{X}$-valued entire functions corresponding to a de Branges operator $\mathfrak{E}(z)=(E_-(z),E_+(z))$ and $U$ be the $\mathcal{I}$-unitary operator matrix as in $(\ref{Operator matrix})$ such that $$F_-(z)=E_-(z)~U_{11}+E_+(z)~U_{21} ,\hspace{.3cm} F_+(z)=E_-(z)~U_{12}+E_+(z)~U_{22}$$ belong to $\Phi(\mathfrak{X})$ for all $z\in\mathbb{C}$. Then $\mathfrak{F}(z)=(F_-(z),F_+(z))$ is a de Branges operator and $\mathcal{B}(\mathfrak{E})=\mathcal{B}(\mathfrak{F})$.
\end{thm}
\begin{proof}
$F_+(z)$ and $F_-(z)$ both are $\Phi(\mathfrak{X})$-valued entire functions. Fix any $\beta\in\mathbb{C}_+$ to be such that $E_+(\beta)$ and $E_-(\overline{\beta})$ both are invertible. Then $||E_+^{-1}(\beta)E_-(\beta)||\leq 1$ together with Lemma \ref{Unitary lemma} implies that
$$F_+(\beta)=E_+(\beta)[E_+^{-1}(\beta)E_-(\beta)U_{12}U_{22}^{-1}+I]U_{22}$$
is invertible. Similarly, it can be proved that $F_-({\overline{\beta}})$ is invertible. Also, it only needs a small calculation to show that, for every $z,w\in\mathbb{C}$,
$$F_+(z)F_+(w)^*-F_-(z)F_-(w)^*=E_+(z)E_+(w)^*-E_-(z)E_-(w)^*.$$ 
This implies $F_+^{-1}F_-\in\mathcal{S}^{in}\cap\mathcal{S}_*^{in}$ and $\mathcal{B}(\mathfrak{E})=\mathcal{B}(\mathfrak{F})$. 
\end{proof}
The following theorem gives the converse of this result.
\begin{thm}
\label{dB space equality}
Let $\mathfrak{E}(z)=(E_-(z),E_+(z))$ and $\mathfrak{F}(z)=(F_-(z),F_+(z))$ be two de Branges operators, and the corresponding de Branges spaces are $\mathcal{B}(\mathfrak{E})$ and $\mathcal{B}(\mathfrak{F})$ with reproducing kernels $K_w^\mathfrak{E}(z)$ and $K_w^\mathfrak{F}(z)$ respectively. Also, suppose that there exists $\beta\in\mathbb{C}_+$ be such that $K_\beta^\mathfrak{E}(\beta)$ and $K_{\overline{\beta}}^\mathfrak{E}(\overline{\beta})$ both are invertible. Then $\mathcal{B}(\mathfrak{E})=\mathcal{B}(\mathfrak{F})$ implies that there exists an $\mathcal{I}$-unitary operator matrix $U$ on $\mathfrak{X}\oplus\mathfrak{X}$ such that 
$$[F_-(z)~F_+(z)]=[E_-(z)~E_+(z)]~U\hspace{.3cm}\mbox{for all}~z\in\mathbb{C}.$$
\end{thm}
\begin{proof}
$\mathcal{B}(\mathfrak{E})=\mathcal{B}(\mathfrak{F})$ implies that $K_w^\mathfrak{E}(z)=K_w^\mathfrak{F}(z)$ for all $z,w\in\mathbb{C}$. Also, $K_\beta^\mathfrak{E}(\beta)$ and $K_{\overline{\beta}}^\mathfrak{E}(\overline{\beta})$ both are invertible gives the following:
\begin{enumerate}
\item $E_+(\beta)$ and $E_-(\overline{\beta})$ are invertible operators, and
\item $||E_+^{-1}(\beta)E_-(\beta)||<1$ and $||E_-^{-1}(\overline{\beta})E_+(\overline{\beta})||<1$.
\end{enumerate}
Consider the following two operator matrices on $\mathfrak{X}\oplus\mathfrak{X}$,
$$U_\mathfrak{E}(\beta)=
\begin{bmatrix}
    E_-(\beta)  & E_+(\beta) \\
    E_-(\overline{\beta})  & E_+(\overline{\beta}) \\
\end{bmatrix}
\hspace{.3cm}\mbox{and}\hspace{.3cm} U_\mathfrak{F}(\beta)=
\begin{bmatrix}
     F_-(\beta) & F_+(\beta) \\
    F_-(\overline{\beta})  & F_+(\overline{\beta}) \\
\end{bmatrix}.
$$
It is clear that these operator matrices follow the identity
$$U_\mathfrak{E}(\beta)\mathcal{I}U_\mathfrak{E}(\beta)^*=
\begin{bmatrix}
    -\rho_\beta(\beta)K_\beta^\mathfrak{E}(\beta)  & 0 \\
    0  & -\rho_{\overline{\beta}}(\overline{\beta})K_{\overline{\beta}}^\mathfrak{F}(\overline{\beta}) \\
\end{bmatrix}
=U_\mathfrak{F}(\beta)\mathcal{I}U_\mathfrak{F}(\beta)^*.$$
From the Schur complement formula, it can be seen that the operator matrix $U_\mathfrak{E}(\beta)$ is invertible if and only if $E_-(\overline{\beta})-E_+(\overline{\beta})E_+^{-1}(\beta)E_-(\beta)$ is an invertible operator. Now
$$E_-(\overline{\beta})-E_+(\overline{\beta})E_+^{-1}(\beta)E_-(\beta)=E_-(\overline{\beta})[I-E_-^{-1}(\overline{\beta})E_+(\overline{\beta})E_+^{-1}(\beta)E_-(\beta)]$$
implies that $U_\mathfrak{E}(\beta)$ is invertible. Similarly, it can be proved that $U_\mathfrak{F}(\beta)$ is also invertible. Consider the operator matrix $U_0=U_\mathfrak{E}(\beta)^*(U_\mathfrak{F}(\beta)^*)^{-1}$. It is clear that $U_0$ is $\mathcal{I}$-unitary and 
$$[F_-(z)~F_+(z)]\mathcal{I}U_\mathfrak{F}(\beta)^*=[E_-(z)~E_+(z)]\mathcal{I}U_\mathfrak{E}(\beta)^*\hspace{.3cm}\mbox{for all}~z\in\mathbb{C}.$$
Now, choose $U=\mathcal{I}U_0\mathcal{I}$, then $U$ is an $\mathcal{I}$-unitary operator and 
$$[F_-(z)~F_+(z)]=[E_-(z)~E_+(z)]~U\hspace{.3cm}\mbox{for all}~z\in\mathbb{C}.$$
\end{proof}
The following lemma provides a connection between the real poles of the meromorphic functions $E_+^{-1}(\cdot)$ and $E_-^{-1}(\cdot)$ when $\mathfrak{E}(z)=(E_-(z),E_+(z))$ is a de Branges operator.
\begin{lemma}
\label{Pole}
Let $\mathcal{B}(\mathfrak{E})$ be a de Branges space of $\mathfrak{X}$-valued entire functions corresponding to a de Branges operator $\mathfrak{E}(z)=(E_-(z),E_+(z))$. Then, a point $x\in\mathbb{R}$ is a pole of $E_+^{-1}(\cdot)$ if and only if it is a pole of $E_-^{-1}(\cdot)$.
\end{lemma}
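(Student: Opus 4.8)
The plan is to translate the statement about poles into one about the invertibility of $E_\pm$ on the real axis, and then exploit the real-axis identity satisfied by a de Branges operator. First I would invoke the Fredholm analytic theorem (Theorem \ref{FAT}) applied to $E_+$ and $E_-$ separately: since $E_+$ is invertible throughout $\mathbb{C}_+$ and $E_-$ throughout $\mathbb{C}_-$, neither falls into the trivial alternative, so each of $E_+^{-1}(\cdot)$ and $E_-^{-1}(\cdot)$ is meromorphic on $\mathbb{C}$ and analytic precisely off the discrete set where the corresponding entire function fails to be boundedly invertible. In particular, for a real point $x$ I would argue that $x$ is a pole of $E_+^{-1}(\cdot)$ if and only if $E_+(x)$ is not invertible: if $E_+(x)$ were invertible, then invertibility on an open neighbourhood together with analyticity of the inverse there would make $x$ a regular point; conversely, if $E_+^{-1}(\cdot)$ extended analytically across $x$, then letting $z\to x$ in $E_+(z)E_+(z)^{-1}=I$ would produce a bounded inverse of $E_+(x)$, a contradiction. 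The identical reduction applies to $E_-$, so the lemma becomes the assertion that $E_+(x)$ is non-invertible exactly when $E_-(x)$ is.

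Next I would use the real-axis relation $E_+(x)E_+(x)^*-E_-(x)E_-(x)^*=0$, valid for all $x\in\mathbb{R}$. Pairing this identity with an arbitrary $u\in\mathfrak{X}$ gives $\lVert E_+(x)^*u\rVert_\mathfrak{X}^2=\lVert E_-(x)^*u\rVert_\mathfrak{X}^2$, whence $\ker E_+(x)^*=\ker E_-(x)^*$ and in particular $\dim(\ker E_+(x)^*)=\dim(\ker E_-(x)^*)$.

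To convert this equality of cokernel dimensions into an equality of kernel dimensions, I would use the fact recorded in (\ref{Index}) that $\mbox{ind}(E_+(x))=\mbox{ind}(E_-(x))=0$. Since for a Fredholm operator of index zero the kernel and cokernel have equal dimension, I obtain $\dim(\ker E_+(x))=\dim(\ker E_+(x)^*)=\dim(\ker E_-(x)^*)=\dim(\ker E_-(x))$. Consequently $E_+(x)$ has trivial kernel — hence, being of index zero, is invertible — if and only if $E_-(x)$ does; equivalently, $E_+(x)$ fails to be invertible if and only if $E_-(x)$ fails to be invertible, which by the first paragraph is exactly the claimed equivalence of poles.

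The only delicate point is the first step, namely cleanly identifying the poles of the meromorphic inverse with the non-invertibility locus of the entire Fredholm-valued function; I expect that to be the crux. Once this dictionary is in place, the remaining argument is the elementary kernel/cokernel bookkeeping above, driven entirely by the real-axis identity and the vanishing of the index.
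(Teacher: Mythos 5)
Your proof is correct, and it reaches the paper's conclusion by a slightly different final step. Both arguments share the same skeleton: Theorem \ref{FAT} reduces the statement about poles of $E_\pm^{-1}(\cdot)$ to the claim that, for $x\in\mathbb{R}$, $E_+(x)$ is invertible precisely when $E_-(x)$ is (the paper states this reduction in one sentence; your continuity argument, letting $z\to x$ in $E_+(z)E_+(z)^{-1}=I$, is the justification it leaves implicit), and both then exploit the real-axis identity $E_+(x)E_+(x)^*=E_-(x)E_-(x)^*$. The difference is in how that identity is used. The paper invokes Douglas's range-inclusion theorem \cite{Douglas} to conclude $\mbox{rng}\,E_+(x)=\mbox{rng}\,E_-(x)$, so that invertibility of $E_+(x)$ makes $E_-(x)$ surjective, and then appeals to the equality of adjoint kernels to finish. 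You avoid Douglas entirely: pairing the identity with $u\in\mathfrak{X}$ gives $\lVert E_+(x)^*u\rVert=\lVert E_-(x)^*u\rVert$, hence $\ker E_+(x)^*=\ker E_-(x)^*$, and the index-zero relation (\ref{Index}) converts this equality of cokernel dimensions into $\dim\ker E_+(x)=\dim\ker E_-(x)$, so that invertibility (trivial kernel plus index zero plus closed range) transfers from one to the other. Your route is more elementary and self-contained, resting only on the definition of the index and basic Fredholm facts already recorded in the paper; the paper's citation of Douglas buys brevity and yields the range equality itself, which is of independent interest. Note also that the paper's argument, as written, passes from surjectivity of $E_-(x)$ and $\ker E_-(x)^*=\{0\}$ directly to invertibility, a step that itself tacitly needs $\mathrm{ind}(E_-(x))=0$ (surjectivity alone does not give injectivity); your explicit kernel/cokernel bookkeeping makes that step visible, so if anything your version is the more complete one.
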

\begin{proof}
In view of Theorem \ref{FAT}, to prove this lemma, it is sufficient to show that for any $x\in\mathbb{R}$, $E_+(x)$ is invertible if and only if $E_-(x)$ is invertible. Recall that for all $x\in\mathbb{R}$, $E_+$ and $E_-$ satisfy the following identity:
$$E_+(x)E_+(x)^*=E_-(x)E_-(x)^*.$$
Now, a theorem due to Douglas (see \cite{Douglas}) gives $\mathrm{rng}E_+(x)=\mathrm{rng}E_-(x)$ for all $x\in\mathbb{R}$. Thus $E_+(x)$ is invertible implies $\mathrm{rng}E_+(x)=\mathfrak{X}$ and $\ker E_+(x)=\{0\}$. Therefore, $\mathrm{rng}E_-(x)=\mathfrak{X}$ and the equality implies 
$$\ker E_-(x)^*=\ker E_+(x)^*=\{0\}.$$
Now, since $\mathrm{ind}(E_-(x))=0$, we have $\ker E_-(x)=\{0\}$, which gives $E_-(x)$ is invertible. Similarly, when $E_-(x)$ is invertible, $E_+(x)$ is invertible.
\end{proof}
The following lemma can be considered the vector generalization of Problem $45$ from the book \cite{Branges 4}.
\begin{lemma}
Let $\mathcal{B}(\mathfrak{E})$ be a de Branges space of $\mathfrak{X}$-valued entire functions corresponding to the de Branges operator $\mathfrak{E}(z)=(E_-(z),E_+(z))$. Assume that $f\in\mathcal{B}(\mathfrak{E})$ and $\alpha$ is any complex number such that $f(\alpha)=0$. Then the following implications hold:
\begin{enumerate}
\item $\frac{f(z)}{z-\alpha}\in\mathcal{B}(\mathfrak{E})$ for all $\alpha\in\mathbb{C}\setminus\mathbb{R}$, and
\item $\frac{f(z)}{z-\alpha}\in\mathcal{B}(\mathfrak{E})$ for all $\alpha\in\mathbb{R}$, where $E_+(\alpha)$ (so too $E_-(\alpha)$) is invertible.
\end{enumerate} 
\end{lemma}
\begin{proof}
We divide the proof into the following three cases:\\
$\underline{\mbox{Case}~1~(\mbox{When}~\alpha\in\mathbb{C}_+):}$ \\
Since $f\in\mathcal{B}(\mathfrak{E})$, $E_+^{-1}f\in H_\mathfrak{X}^2(\mathbb{C}_+)$ and $E_-^{-1}f\in H_\mathfrak{X}^2(\mathbb{C}_+)^\perp$. To show that $\frac{f(z)}{z-\alpha}\in\mathcal{B}(\mathfrak{E})$ we need to verify that $E_+^{-1}(z)\frac{f(z)}{z-\alpha}\in H_\mathfrak{X}^2(\mathbb{C}_+)$ and $E_-^{-1}(z)\frac{f(z)}{z-\alpha}\in H_\mathfrak{X}^2(\mathbb{C}_+)^\perp$. Now, 
$$E_-^{-1}(z)\frac{f(z)}{z-\alpha}=\frac{z-i}{z-\alpha}\frac{E_-^{-1}(z)f(z)}{z-i}.$$
Since $\frac{z-i}{z-\alpha}$ is bounded and analytic on $\mathbb{C}_-$ and $\frac{E_-^{-1}(z)f(z)}{z-i}\in H_\mathfrak{X}^2(\mathbb{C}_+)^\perp$, it is clear that $E_-^{-1}(z)\frac{f(z)}{z-\alpha}\in H_\mathfrak{X}^2(\mathbb{C}_+)^\perp$. Similarly, we write 
$$E_+^{-1}(z)\frac{f(z)}{z-\alpha}=\frac{z+i}{z-\alpha}\frac{E_+^{-1}(z)f(z)}{z+i}.$$
Since $\frac{z+i}{z-\alpha}$ is not analytic and bounded on $\mathbb{C}_+$, we can not argue as before. But using the Cauchy integral formula and the facts that $\frac{1}{z+i}\in H^2(\mathbb{C}_+)$, $E_+^{-1}(z)f(z)\in H^2_\mathfrak{X}(\mathbb{C}_+)$, it can be proved that $E_+^{-1}(z)\frac{f(z)}{z-\alpha}\in H^2_\mathfrak{X}(\mathbb{C}_+)$. For the supporting calculations, see Lemma $3.14$ in \cite{ArD18}.\\
$\underline{\mbox{Case}~2~(\mbox{When}~\alpha\in\mathbb{C}_-):}$ \\
Since this case can be proved similar to the first one, we avoid the calculations.\\
$\underline{\mbox{Case}~3~(\mbox{When}~\alpha\in\mathbb{R}):}$ \\
Observe from Lemma \ref{Pole} that for any $\alpha\in\mathbb{R}$, $E_+(\alpha)$ is invertible if and only if $E_-(\alpha)$ is invertible.  When $E_+(\alpha)$ is invertible, $E_+^{-1}f$ and $E_-^{-1}f$ are analytic at $\alpha$. Now, the remaining argument follows from Lemma \ref{P45}.
\end{proof}
\section{Global Factorization and Analytic Equivalence}
\label{Sec-4}
This section presents a global factorization of Fredholm operator valued entire functions, which are invertible at least at one point. This factorization then provides a connection between the de Branges spaces considered in \cite{Rovnyak} (Theorem $1$) and in \cite{Mahapatra}. Also, a result of analytic equivalence between the corresponding two reproducing kernels of de Branges spaces is concluded. Let $A(z)$ be a $B(\mathfrak{X})$ valued entire function such that $A(z)\in \Phi(\mathfrak{X})$ for all $z\in\mathbb{C}$. Suppose $A(z)$ is invertible at $z_0\in\mathbb{C}$, then because of Theorem \ref{FAT} there exists a discrete set $D=\{z_1, z_2,\ldots,z_n,\ldots\}\subset\mathbb{C}$ such that $A(z)$ are invertible for all $z\in\mathbb{C}\setminus D$. The case when $D$ is finite, a global factorization was discussed in \cite{Bart}, and local factorizations of $A(z)$ can be found in \cite{Holden}. Here, we consider the case when $D$ is infinite and thus $|z_n|\to \infty$ as $n\to \infty$.\\
 The following theorem is due to L. de Branges and J. Rovnyak ( Theorem $19$, Appendix \cite{Rovnyak} ). 
 \begin{thm}
 \label{product}
 Let $\{P_n\}$ be a sequence of finite rank orthogonal projections and $\{z_n\}_1^\infty$ be a sequence of complex numbers such that $|z_n|\to\infty$ as $n\to\infty$. Suppose a complex number $z_0$ is such that $z_n-z_0\neq 0$ for all $n$. Then
 \begin{multline}
 \label{P1}
 P(z)=\lim_{n\to\infty}\left[I-\frac{z-z_0}{z_1-z_0}P_1\right]\exp\left[\frac{z-z_0}{z_1-z_0}P_1\right]\ldots\\
 \left[I-\frac{z-z_0}{z_n-z_0}P_n\right]\exp\left[\frac{z-z_0}{z_n-z_0}P_n+\ldots+\frac{1}{n}\frac{(z-z_0)^n}{(z_n-z_0)^n}P_n\right]
 \end{multline}
 converges uniformly in any bounded set with respect to the operator norm and $P(z)$ is a $B(\mathfrak{X})$ valued entire function such that $I-P(z)$ is compact operator for all $z\in\mathbb{C}$. Moreover, for all $z\in\mathbb{C}\setminus\{z_n\}_1^\infty$, $P(z)$ is invertible. 
 \end{thm}
 The next theorem provides a global factorization of Fredholm operator valued entire functions that are invertible at least at one point. This theorem generalizes Theorem $20$ from the appendix in \cite{Rovnyak}. Since the proof follows a similar line of reasoning, we have included it as an appendix at the end of this article.
 \begin{thm}
 \label{AT}
 Let $A(z)$ be a $B(\mathfrak{X})$ valued entire function such that $A(z)\in\Phi(\mathfrak{X})$ for all $z\in\mathbb{C}$ and $A(z_0)$ is invertible for some $z_0\in\mathbb{C}$. Then $A$ can be factored as
 \begin{equation}
 A(z)=P(z)~E(z)=F(z)~Q(z),\label{AE}
 \end{equation}
 where $P(z)$ and $Q(z)$ are operator valued entire functions of the form $(\ref{P1})$ and $E(z), F(z)$ are invertible operator valued entire functions.
 \end{thm}
 
The above theorem can be considered as the operator analog of the well known Weierstrass factorization theorem for scalar valued entire functions. Also, for any matrix valued entire function $A(z)$, a factorization of the form $(\ref{AE})$ can be readily derived; here, a point $z_0\in\mathbb{C}$ is considered a zero of $A(z)$ if the determinant of $A(z_0)$ is equal to zero. Additionally, in this direction, it is worth mentioning some other factorizations of matrix valued analytic functions available in the literature. For instance, the seminal work of Potapov \cite{Potapov} regarding the factorization of matrix valued inner functions and its application to multiplicative representations of matrix valued analytic functions. Recently, this was extended in \cite{Curto} for operator valued inner functions. The factorizations of Potapov for J-contractive and J-inner matrix valued functions and their applications are also available in \cite{ArD08}. A Hadamard factorization for matrix valued entire functions can be found in \cite{Ran}. The following proposition gives a sense of how the de Branges spaces considered in \cite{Rovnyak} are connected with the de Branges spaces under consideration in this paper.
\begin{prop}
\label{Connection}
Suppose $\mathfrak{E}(z)=(E_-(z),E_+(z))$ is a de Branges operator having the following factorizations of both $E_-(z)$ and $E_+(z)$ as deduced in the previous theorem:
\begin{enumerate}
\item $E_-(z)=F_-(z)Q_-(z)$ for all $z\in\mathbb{C}$, where $I-Q_-(z)$ is compact and $F_-(z)$ is invertible for all $z\in\mathbb{C}$. Also, $Q_-(z)$ is invertible, whereas $E_-(z)$ is invertible.
\item $E_+(z)=F_+(z)Q_+(z)$ for all $z\in\mathbb{C}$, where $I-Q_+(z)$ is compact and $F_+(z)$ is invertible for all $z\in\mathbb{C}$. Also, $Q_+(z)$ is invertible, whereas $E_+(z)$ is invertible.
\end{enumerate}
Then, if $F_-(z)=F_+(z)$ for all $z\in\mathbb{C}$, the pair of Fredholm operator valued entire functions $\mathfrak{Q}(z)=(Q_-(z),Q_+(z))$ is a de Branges operator, and $\mathcal{B}(\mathfrak{E})=F_+\mathcal{B}(\mathfrak{Q})$.
\end{prop}
\begin{proof}
Observe that
\begin{align}
K_\xi(z)&=\frac{E_+(z)E_+(\xi)^*-E_-(z)E_-(\xi)^*}{\rho_\xi(z)}\nonumber\\
& = F_+(z)\left[\frac{Q_+(z)Q_+(\xi)^*-Q_-(z)Q_-(\xi)^*}{\rho_\xi(z)}\right]F_+(\xi)^*\nonumber\\
& = F_+(z)\Gamma_\xi(z)F_+(\xi)^*,\label{D1}
\end{align}
where $\Gamma_\xi(z)=\frac{Q_+(z)Q_+(\xi)^*-Q_-(z)Q_-(\xi)^*}{\rho_\xi(z)}$. From (\ref{D1}), it is clear that $\Gamma_\xi(z)$ is a positive kernel, and the pair of Fredholm operator valued entire functions $\mathfrak{Q}(z)=(Q_-(z),Q_+(z))$ is a de Branges operator. Let $u\in\mathfrak{X}$ and for some $\xi\in\mathbb{C}$, $v=F_+(\xi)^*u$, then the following linear map
$$\Gamma_\xi v\mapsto K_\xi u=F_+(\cdot)\Gamma_\xi v$$
between $\mathcal{B}(\mathfrak{Q})$ and $\mathcal{B}(\mathfrak{E})$ proves that $\mathcal{B}(\mathfrak{E})=F_+\mathcal{B}(\mathfrak{Q})$.
\end{proof}
\begin{rmk}
Observe that the above proposition implies that, given any de Branges space $\mathcal{B}(\mathfrak{E})$ as in our present consideration, if $E_+(z)$ and $E_-(z)$ have a common factor $F_+(z)$ that is invertible for all $z\in\mathbb{C}$, then $\mathcal{B}(\mathfrak{E})$ is canonically isomorphic to a de Branges space $\mathcal{B}(\mathfrak{Q})$ as considered in \cite{Rovnyak}, differing only by the fixed invertible factor $F_+(z)$. However, in general, the hypothesis that $F_+(z)=F_-(z)$ for all $z\in\mathbb{C}$ need not be true. In the next section (Theorem \ref{Embedding}), we shall see that though a situation of equal factors occurs, the factor is having real zeros of $E_+(z)$ ( same as $E_-(z)$).
\end{rmk}
 
\begin{rmk}
Observe that $(\ref{D1})$ gives an analytic equivalence between the two reproducing kernels $K_\xi(z)$ and $\Gamma_\xi(z)$, i.e.,
 $$K_z(z)=F_+(z)\Gamma_z(z)F_+(z)^*\hspace{.3cm}\mbox{for all}~z\in\mathbb{C}.$$
 Details about analytic equivalence of two operator valued entire functions can be found in \cite{Gohberg1} and \cite{Gohberg}.
\end{rmk}
\section{Isometric Embedding}
\label{Sec-5}
In this section, we deal with several isometric embedding results related to the vector valued de Branges spaces $\mathcal{B}(\mathfrak{E})$. In particular, we present the vector version of Problem $44$ from the book \cite{Branges 4}. Theorem \ref{Embedding} is the main result of this section. Moreover, assume that the de Branges operators involved in this section satisfy the following additional two conditions:
\begin{Hypo}
\label{H1}
Suppose $\mathfrak{E}(z)=(E_-(z),E_+(z))$  is a de Branges operator, then
\begin{enumerate}
\item  $E_+(z)$ is invertible for all $z\in\mathbb{C}_+$,
 \item $E_-(z)$ is invertible for all $z\in\mathbb{C}_-$.
 \end{enumerate}
\end{Hypo}
The following two lemmas are motivated by \cite{Woracek}, where the de Branges spaces under consideration were Hilbert spaces of scalar valued entire functions.
\begin{lemma}
\label{Isometry}
Let $\mathcal{B}(\mathfrak{E})$ and $\mathcal{B}(\mathfrak{E}^0)$ be two de Branges spaces corresponding to the de Branges operators $\mathfrak{E}(z)=(E_-(z),E_+(z))$ and $\mathfrak{E}^0(z)=(E_-^0(z),E_+^0(z))$ respectively. Suppose $P(z)$ is a $B(\mathfrak{X})$-valued entire function such that 
\begin{equation}
E_+(z)=P(z)E_+^0(z)\hspace{.5cm}\mbox{for all}~z\in\mathbb{C},\label{W1}
\end{equation}
and
\begin{equation}
E_-(z)=P(z)E_-^0(z)\hspace{.5cm}\mbox{for all}~z\in\mathbb{C}.\label{W2}
\end{equation}
Then $P\mathcal{B}(\mathfrak{E}^0)$ is contained in $\mathcal{B}(\mathfrak{E})$ isometrically.
\end{lemma}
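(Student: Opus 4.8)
The plan is to combine the Hardy-space description of $\mathcal{B}(\mathfrak{E})$ with the factorization identities (\ref{W1}) and (\ref{W2}). Fix $g\in\mathcal{B}(\mathfrak{E}^0)$. Since $P$ and $g$ are entire, $Pg$ is an $\mathfrak{X}$-valued entire function, so it suffices to verify the two Hardy-space membership conditions defining $\mathcal{B}(\mathfrak{E})$ together with the norm equality $\|Pg\|_{\mathcal{B}(\mathfrak{E})}=\|g\|_{\mathcal{B}(\mathfrak{E}^0)}$.

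First I would extract operator identities on the half-planes. On $\mathbb{C}_+$ both $E_+$ and $E_+^0$ are invertible, so left-multiplying (\ref{W1}) by $E_+^{-1}$ and right-multiplying by $(E_+^0)^{-1}$ gives
$$E_+(z)^{-1}P(z)=\big(E_+^0(z)\big)^{-1},\qquad z\in\mathbb{C}_+.$$
Hence $E_+^{-1}(Pg)=(E_+^0)^{-1}g$ as holomorphic $\mathfrak{X}$-valued functions on $\mathbb{C}_+$. Because $g\in\mathcal{B}(\mathfrak{E}^0)$ forces $(E_+^0)^{-1}g\in H^2_\mathfrak{X}(\mathbb{C}_+)$, the first condition $E_+^{-1}(Pg)\in H^2_\mathfrak{X}(\mathbb{C}_+)$ is immediate. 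The identical manipulation applied to (\ref{W2}) on $\mathbb{C}_-$, where $E_-$ and $E_-^0$ are invertible, yields $E_-(z)^{-1}P(z)=(E_-^0(z))^{-1}$ there, so that $E_-^{-1}(Pg)=(E_-^0)^{-1}g$ on $\mathbb{C}_-$; since $(E_-^0)^{-1}g\in H^2_\mathfrak{X}(\mathbb{C}_+)^\perp$ by hypothesis, the second condition $E_-^{-1}(Pg)\in H^2_\mathfrak{X}(\mathbb{C}_+)^\perp$ follows. By the Hardy-space characterization of $\mathcal{B}(\mathfrak{E})$, we conclude $Pg\in\mathcal{B}(\mathfrak{E})$.

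For the isometry, the functions $E_+^{-1}P$ and $(E_+^0)^{-1}$ are meromorphic on $\mathbb{C}$ by the Fredholm analytic theorem (Theorem \ref{FAT}) and agree on the open set $\mathbb{C}_+$, hence they coincide as meromorphic functions; in particular $E_+^{-1}(t)P(t)=(E_+^0(t))^{-1}$ for all $t\in\mathbb{R}$ outside the discrete pole set. Substituting into the norm formula (\ref{N1}) then gives
$$\|Pg\|_{\mathcal{B}(\mathfrak{E})}^2=\int_{-\infty}^\infty\big\|E_+^{-1}(t)P(t)g(t)\big\|_\mathfrak{X}^2\,dt=\int_{-\infty}^\infty\big\|(E_+^0(t))^{-1}g(t)\big\|_\mathfrak{X}^2\,dt=\|g\|_{\mathcal{B}(\mathfrak{E}^0)}^2,$$
so $g\mapsto Pg$ is the desired isometric embedding.

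The algebra here is routine; the one point needing care is justifying that the half-plane operator identity $E_+^{-1}P=(E_+^0)^{-1}$ genuinely transfers to real arguments, so that the two integrands in the norm computation coincide a.e.\ on $\mathbb{R}$. I would handle this through the identity theorem for the meromorphic functions $E_+^{-1}P$ and $(E_+^0)^{-1}$ rather than through direct boundary-value estimates; the possible real poles of $E_+^{-1}$ are harmless because the product $E_+^{-1}(Pg)$ has already been shown to be a genuine Hardy function and is therefore square-integrable on $\mathbb{R}$ (Lemma \ref{Pole} corroborates that these real singularities are consistently located). This is the main obstacle, and once it is settled both the membership and the isometry are immediate.
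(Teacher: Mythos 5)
Your proof is correct and takes essentially the same route as the paper: both reduce the lemma to the key identity $E_+^{-1}P=(E_+^0)^{-1}$ (and its $E_-$ analogue), use the Hardy-space characterization of $\mathcal{B}(\mathfrak{E})$ for membership of $Pg$, and then match the norm integrands a.e.\ on $\mathbb{R}$. The only difference is in how that identity is carried to the real line --- the paper applies the Fredholm analytic theorem to $P$ itself (so $P(t)^{-1}$ exists off a discrete set and cancels in the integrand), whereas you continue the identity meromorphically from $\mathbb{C}_+$; both justifications are valid.
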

\begin{proof}
From (\ref{W1}) and (\ref{W2}) it is clear that $P(z)$ is $\Phi(\mathfrak{X})$-valued entire function and is invertible for all $z\in\mathbb{C}_+\cup\mathbb{C}_-$. Also, due to the Fredholm analytic theorem, we conclude that $P(z)$ is invertible for all $z\in\mathbb{R}$ except possibly on a discrete set.  Now, suppose $f\in \mathcal{B}(\mathfrak{E}^0)$, i.e., $(E_+^0)^{-1}f\in H^2_\mathfrak{X}(\mathbb{C}_+)$ and $(E_-^0)^{-1}f\in H^2_\mathfrak{X}(\mathbb{C}_+)^\perp$. To show the isometric containment of $P\mathcal{B}(\mathfrak{E}^0)$ in $\mathcal{B}(\mathfrak{E})$, it is sufficient to show that $Pf\in \mathcal{B}(\mathfrak{E})$ and $||Pf||_{\mathcal{B}(\mathfrak{E})}=||f||_{\mathcal{B}(\mathfrak{E}^0)}$. The following supplementary calculations prove the lemma.
$$E_+^{-1}(z)P(z)f(z)=(E_+^0)^{-1}(z)P(z)^{-1}P(z)f(z)=(E_+^0)^{-1}(z)f(z)\hspace{.3cm}\forall z\in\mathbb{C}_+,$$
$$E_-^{-1}(z)P(z)f(z)=(E_-^0)^{-1}(z)P(z)^{-1}P(z)f(z)=(E_-^0)^{-1}(z)f(z)\hspace{.3cm}\forall z\in\mathbb{C}_-$$
and
\begin{align*}
||Pf||_{\mathcal{B}(\mathfrak{E})} & =\int_{-\infty}^\infty ||E_+^{-1}(t)P(t)f(t)||^2 dt \\
& = \int_{-\infty}^\infty ||(E_+^o)^{-1}(t)P(t)^{-1}P(t)f(t)||^2 dt \\
& = \int_{-\infty}^\infty ||(E_+^o)^{-1}(t)f(t)||^2 dt \\
& =||f||_{\mathcal{B}(\mathfrak{E}^0)}.
\end{align*}
\end{proof}
The following lemma is an application of the previous lemma.
\begin{lemma}
Let $\mathcal{B}(\mathfrak{E})$ and $\mathcal{B}(\mathfrak{F})$ be two de Branges spaces corresponding to the de Branges operators $\mathfrak{E}(z)=(E_-(z),E_+(z))$ and $\mathfrak{F}(z)=(F_-(z),F_+(z))$ respectively. Assume that for all $z\in\mathbb{C}$, the following four relations are true:
\begin{align}
F_+(z)E_+(z)& =E_+(z)F_+(z);\label{a}\\  
F_+(z)E_-(z)& = E_-(z)F_+(z);\label{b}\\  
F_-(z)E_+(z)& = E_+(z)F_-(z); \label{c}\\ 
F_-(z)E_-(z)& = E_-(z)F_-(z).  \label{d}
\end{align}
Then, the pair of Fredholm operator valued entire functions 
$$\mathfrak{E}\mathfrak{F}(z)=(E_-(z)F_-(z),E_+(z)F_+(z))$$ 
will be a de Branges operator and 
\begin{equation}
\mathcal{B}(\mathfrak{E}\mathfrak{F})=E_+\mathcal{B}(\mathfrak{F})\oplus F_-\mathcal{B}(\mathfrak{E}).
\end{equation}
\end{lemma}
\begin{proof}
Only after a few small calculations, it can be easily shown that $$(E_+F_+)^{-1}(E_-F_-)\in\mathcal{S}^{in}\cap\mathcal{S}_*^{in}.$$
This implies that $\mathfrak{E}\mathfrak{F}(z)=(E_-(z)F_-(z),E_+(z)F_+(z))$ is a de Branges operator, and $\mathcal{B}(\mathfrak{E}\mathfrak{F})$ is the corresponding de Branges space. If $K_w^\mathfrak{E}(z)$ is the reproducing kernel of $\mathcal{B}(\mathfrak{E})$  and $K_w^\mathfrak{F}(z)$ is the reproducing kernel of $\mathcal{B}(\mathfrak{F})$, the reproducing kernel $K_w(z)$ of $\mathcal{B}(\mathfrak{E}\mathfrak{F})$ can be represented as follows:
\begin{align*}
K_w(z)& = \frac{E_+(z)F_+(z)F_+(w)^*E_+(w)^*-E_-(z)F_-(z)F_-(w)^*E_-(w)^*}{\rho_w(z)}\\
& = E_+(z)K_w^\mathfrak{F}(z)E_+(w)^*+F_-(z)K_w^\mathfrak{E}(z)F_-(w)^*.
\end{align*}
Now, we show that $E_+\mathcal{B}(\mathfrak{F})$ is isometrically contained in $\mathcal{B}(\mathfrak{E}\mathfrak{F})$. We follow the same technique as in the previous lemma. Specifically, we show that $E_+f\in \mathcal{B}(\mathfrak{E}\mathfrak{F})$ for all $f\in \mathcal{B}(\mathfrak{F})$ and $||E_+f||_{\mathcal{B}(\mathfrak{E}\mathfrak{F})}=||f||_{\mathcal{B}(\mathfrak{F})}$. Since, $f\in \mathcal{B}(\mathfrak{F})$, we have
$$F_+^{-1}f\in H^2_\mathfrak{X}(\mathbb{C}_+)~\mbox{and}~F_-^{-1}f\in H^2_\mathfrak{X}(\mathbb{C}_+)^\perp.$$
Now, 
$$(E_+F_+)^{-1}(E_+f)=F_+^{-1}E_+^{-1}E_+f=F_+^{-1}f\in H^2_\mathfrak{X}(\mathbb{C}_+),$$
and
\begin{align*}
(E_-F_-)^{-1}(E_+f)&=F_-^{-1}E_-^{-1}E_+f\\
&= E_-^{-1}F_-^{-1}E_+f\hspace{.3cm}[~\mbox{using}~ (\ref{d})~]\\
&=(E_-^{-1}E_+)(F_-^{-1}f)\in H^2_\mathfrak{X}(\mathbb{C}_+)^\perp\hspace{.3cm}[~\mbox{using}~ (\ref{c})~],
\end{align*}
where the last inclusion follows from $(\ref{Equality})$ and the facts that $E_+^{-1}E_-\in \mathcal{S}^{in}\cap\mathcal{S}_*^{in}$ and $F_-^{-1}f\in H^2_\mathfrak{X}(\mathbb{C}_+)^\perp$. The norm equality is easy to verify. Hence, $E_+\mathcal{B}(\mathfrak{F})$ is isometrically contained in $\mathcal{B}(\mathfrak{E}\mathfrak{F})$. Similarly, it can be proved that $F_-\mathcal{B}(\mathfrak{E})$ is  isometrically contained in $\mathcal{B}(\mathfrak{E}\mathfrak{F})$. 
Also, the following calculation shows that $E_+(z)K_w^\mathfrak{F}(z)E_+(w)^*$ is the reproducing kernel of the Hilbert space $E_+\mathcal{B}(\mathfrak{F})$. For any $f\in\mathcal{B}(\mathfrak{F})$ and $u\in\mathfrak{X}$,
\begin{align*}
\langle E_+f,E_+K_w^\mathfrak{F}E_+(w)^*u\rangle_{E_+\mathcal{B}(\mathfrak{F})}& = \langle E_+f,E_+K_w^\mathfrak{F}E_+(w)^*u\rangle_{\mathcal{B}(\mathfrak{E}\mathfrak{F})}\\
& = \int_{-\infty}^\infty \langle F_+^{-1}(t)f(t),F_+^{-1}(t)K_w^\mathfrak{F}(t)E_+(w)^*u\rangle_\mathfrak{X} dt\\
& = \langle f,K_w^\mathfrak{F}E_+(w)^*u\rangle_{\mathcal{B}(\mathfrak{F})}\\
& = \langle f(w),E_+(w)^*u\rangle_\mathfrak{X}\\
& = \langle (E_+f)(w),u\rangle_\mathfrak{X}.
\end{align*}
Similarly, it can be shown that $F_-(z)K_w^\mathfrak{E}(z)F_-(w)^*$ is the reproducing kernel of the Hilbert space $F_-\mathcal{B}(\mathfrak{E})$. The rest of the proof follows from a general complementation theory in Hilbert spaces, which can be found in \cite{Alpay} (Chapter $1.5$).
\end{proof}
Now, we mention a particular case of the Theorem \ref{AT}, which will be used later in this section.
\begin{thm}
\label{Factorization}
Let $A(z)$ be a $\Phi{(\mathfrak{X}})$-valued entire function that is invertible at least at one point. Then a factorization of $A(z)$ of the form $A(z)=N(z)A_0(z)$ holds, where $A_0(z)$ is a $\Phi{(\mathfrak{X}})$-valued entire function that is invertible for all real $z$ and $N(z)$ is a $B(\mathfrak{X})$-valued entire function of the form $(\ref{P1})$. Also, $N(z)$ is invertible for all $z$ except for those $z\in\mathbb{R}$ where $A(z)$ is not invertible.
\end{thm}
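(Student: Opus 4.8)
The plan is to mimic the inductive extraction in the proof of Theorem \ref{AT}, with the single modification that at each stage we peel off only the \emph{real} points of non-invertibility of $A(z)$, carrying the non-real ones intact into the surviving factor. Since $A(z)\in\Phi(\mathfrak{X})$ for all $z$ and $A$ is invertible at one point, Theorem \ref{FAT} produces a discrete set $D\subset\mathbb{C}$ such that $A(z)$ is invertible for $z\in\mathbb{C}\setminus D$ and $A(\cdot)^{-1}$ is meromorphic on $\mathbb{C}$. Fix a point of invertibility $z_0$ to serve as the base point of the product (\ref{P1}), and let $\{x_1,x_2,\ldots\}=D\cap\mathbb{R}$ be the real points of non-invertibility ordered by increasing distance from $z_0$; as $D$ is discrete this set is either finite or satisfies $|x_n|\to\infty$, and the meromorphy of $A(\cdot)^{-1}$ guarantees that the order of non-invertibility at each $x_n$ is finite.

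Next I would run the induction of Theorem \ref{AT} verbatim, except that at the $n$-th stage I select the real point $x_n$ nearest to $z_0$ at which the current factor $E_n(z)$ fails to be invertible (allowing repetition to absorb higher order), take $P_n$ to be the finite-rank orthogonal projection onto $(\mbox{rng}~E_n(x_n))^\perp$, and write $A(z)=G_n(z)E_{n+1}(z)$ with $G_n$ as in Theorem \ref{product}. Because multiplying by $[I-\frac{z-z_0}{z-x_n}P_n]$ is invertible off the single real point $x_n$, this step never disturbs the non-real non-invertibility points, so they are transmitted unchanged into every $E_{n+1}$. If $D\cap\mathbb{R}$ is finite the process terminates after finitely many steps; if it is infinite, the hypotheses of Theorem \ref{product} are met (finite-rank projections and $|x_n|\to\infty$), so $N(z):=\lim_n G_n(z)$ exists uniformly on bounded sets, is entire of the form (\ref{P1}) with $I-N(z)$ compact, and is invertible for all $z\notin\{x_n\}$. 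The uniform Cauchy estimate used in Theorem \ref{AT} then shows $A_0(z):=\lim_n E_{n+1}(z)$ exists, is a Fredholm operator valued entire function, and satisfies $A(z)=N(z)A_0(z)$.

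It then remains to verify the invertibility assertions. Invertibility of $A_0$ on the whole real line adapts the final step of Theorem \ref{AT}: from $E_{n+1}(z)=G_n(z)^{-1}N(z)A_0(z)$ the factor $G_n(z)^{-1}N(z)$ is invertible for $|z-z_0|<|x_{n+1}-z_0|$, while by construction $E_{n+1}(z)$ is invertible at every \emph{real} $z$ in that disk, whence $A_0(z)$ is invertible there; letting $n\to\infty$ and using $|x_n|\to\infty$ gives invertibility of $A_0$ at all real $z$. For $N(z)$, Theorem \ref{product} already gives invertibility off $\{x_n\}$, and at each $x_n$ the relation $N(x_n)=A(x_n)A_0(x_n)^{-1}$ together with $A(x_n)$ non-invertible and $A_0(x_n)$ invertible forces $N(x_n)$ to be non-invertible (by Proposition \ref{FP}), so $N$ is invertible exactly off the real non-invertibility points of $A$. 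The only point demanding genuine care, rather than transcription from Theorem \ref{AT}, is the verification that restricting the extraction to real points leaves the non-real singularities untouched while still rendering $A_0$ invertible throughout $\mathbb{R}$; the convergence machinery is entirely furnished by Theorems \ref{product} and \ref{AT}.
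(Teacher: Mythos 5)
Your proposal is correct and is essentially the paper's own proof: the paper likewise reruns the induction of Theorem \ref{AT} while extracting only the real points of non-invertibility (with base point taken at the origin), invokes Theorem \ref{product} for the convergence and invertibility of $N(z)$, and gets the convergence of $A_0(z)$ and its invertibility on all of $\mathbb{R}$ by the same argument as in Theorem \ref{AT}. One small repair: Fredholmness of $A_0(z)$ does not follow from the uniform Cauchy estimate (a limit of Fredholm operators need not be Fredholm) but from Proposition \ref{FP} applied to $A(z)=N(z)A_0(z)$ with $A(z)$ and $N(z)$ Fredholm --- which is exactly how the paper concludes --- whereas your appeal to Proposition \ref{FP} for the non-invertibility of $N(x_n)$ is not needed, since invertibility of both $N(x_n)$ and $A_0(x_n)$ would trivially make $A(x_n)$ invertible.
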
 
\begin{proof}
The Fredholm analytic theorem and the fact that $A(z)$ is invertible at least at one point implies that $A(z)$ is invertible for all $z\in\mathbb{C}$ except for a discrete set. Without loss of generality, we may assume that $A(z)$ is invertible at the origin and $D_0$ is the collection of all real points, where $A(z)$ is not invertible. If $D_0$ is empty, then the factorization result follows by choosing $N(z)=I$ and $A_0(z)=A(z)$. Otherwise, let $x_1$ be an element in $D_0$ nearest to the origin and $P_1$ be the orthogonal projection operator on $(\mathrm{rng}A(x_1))^\perp $. $P_1$ is a finite rank operator, as $A(x_1)$ is a Fredholm operator. Then
\begin{equation}
 \left[I-\frac{z}{x_1}P_1\right]^{-1}=\left[I-\frac{z}{z-x_1}P_1\right]
 \end{equation}
 is an operator valued analytic function for all $z\in\mathbb{C}$ except at $x_1$, and $P_1A(x_1)=0$ implies that
 \begin{equation}
 \left[I-\frac{z}{x_1}P_1\right]^{-1}A(z)=(z-x_1-zP_1)\left[\frac{A(z)-A(x_1)}{z-x_1}\right]+A(x_1)
 \end{equation} 
 is an operator valued entire function. Thus $A(z)$ has the factorization $A(z)=N^{(1)}(z)A_0^{(2)}(z)$, where $N^{(1)}(z)=\left[I-\frac{z}{x_1}P_1\right]\exp\left[\frac{z}{x_1}P_1\right]$ and
 \begin{equation}
    A_0^{(2)}(z):= \left\{
    \begin{array}{ll}
        \exp[-\frac{z}{x_1}P_1][(z-x_1-zP_1)[\frac{A(z)-A(x_1)}{z-x_1}]+A(x_1)]  , z \neq x_1 \\
         \exp(-P_1)[A(x_1)-x_1P_1A'(x_1)]  , z = x_1.
    \end{array} \right.
\end{equation} 
Observe that $A_0^{(2)}(z)$ is a $\Phi{(\mathfrak{X}})$-valued entire function invertible at the origin. If $A_0^{(2)}(z)$ is invertible for all real $z$, the proof is complete, and $N(z)=N^{(1)}(z)$, $A_0(z)=A_0^{(2)}(z)$. Otherwise, let $x_2$ be an element in $D_0$ nearest to the origin such that $A_0^{(2)}(x_2)$ is not invertible and continue inductively. At the n-th step of the induction process, we have $A_0^{(n)}(z)$ is a $\Phi{(\mathfrak{X}})$-valued entire function invertible at the origin. Suppose $x_n$ is an element in $D_0$ nearest to the origin such that $A_0^{(n)}(x_n)$ is not invertible. Let $P_n$ is the orthogonal projection on $(\mathrm{rng}A_0^{(n)}(x_n))^\perp$. Again we have the factorization $A(z)=N^{(n)}(z)A_0^{(n+1)}(z)$, where
$$N^{(n)}(z)=\left[I-\frac{z}{x_1}P_1\right]\exp\left[\frac{z}{x_1}P_1\right]\ldots
 \left[I-\frac{z}{x_n}P_n\right]\exp\left[\frac{z}{x_n}P_n+\ldots+\frac{1}{n}\frac{z^n}{x_n^n}P_n\right]$$
 and $A_0^{(n+1)}(z)$ is a $\Phi{(\mathfrak{X}})$-valued entire function invertible at the origin. If $A_0^{(n+1)}(z)$ is invertible for all real $z$, we conclude the proof with $N(z)=N^{(n)}(z)$ and $A_0(z)=A_0^{(n+1)}(z)$. Otherwise, we keep the process moving. This discussion already covered the case when $D_0$ is finite. Now, suppose $D_0$ is infinite with $|x_n|\to\infty$ as $n\to\infty$.\\
 Now, we can apply Theorem \ref{product} to conclude that the sequence  $\{N^{(n)}(z)\}$ converges to some $B(\mathfrak{X})$-valued entire  function $N(z)$ uniformly in any bounded set with respect to the operator norm such that $I-N(z)$ is compact for all $z\in\mathbb{C}$ and invertible for all $z\in\mathbb{C}\setminus \{x_n\}$. Also, the proof of the fact that the sequence $\{A_0^{(n+1)}(z)\}$ converges to some $B(\mathfrak{X})$-valued entire function $A_0(z)$, invertible for all real $z$, uniformly in any bounded set with respect to operator norm follows from Theorem \ref{AT}. Observe that for every $z\in\mathbb{C}$ both $A(z)$ and $N(z)$ are Fredholm operators. Thus $A_0(z)$ is a $\Phi{(\mathfrak{X}})$-valued entire function follows from Proposition \ref{FP}.
 \end{proof}
\begin{rmk}
If the finite rank orthogonal projection operators $P_1, P_2,\ldots, P_n,\ldots$ in the previous theorem are pairwise commutative, then for all $x\in\mathbb{R}$, $N^{(n)}(x)^*=N^{(n)}(x)$. Since the self-adjoint operators are closed subset of $B(\mathfrak{X})$ and $N^{(n)}(x)\to N(x)$ in operator norm, $N(x)$ is self-adjoint for all $x\in\mathbb{R}$.
\end{rmk}
The next theorem establishes a connection between the factorization of Fredholm operator valued entire functions and the structure of vector valued de Branges spaces. Problem $44$ from \cite{Branges 4} states that if $\mathcal{H}(E)$ is a given de Branges space of scalar valued entire functions corresponding to a Hermite-Biehler function $E(z)$, then $E(z)=S(z)E^0(z)$, where $\mathcal{H}(E^0)$ exists, $E^0(z)$ has no real zeros, and the zeros of $S(z)$ are real zeros of $E(z)$. Moreover, the equality $\mathcal{H}(E)=S\mathcal{H}(E^0)$ holds. The following theorem generalizes this problem to the setting of vector valued de Branges spaces.
\begin{thm}
\label{Embedding}
Let $\mathfrak{E}(z)=(E_-(z),E_+(z))$ be a de Branges operator satisfying Hypothesis $\ref{H1}$. Then $E_+(z)=N(z)~E_+^0(z)$ and $E_-(z)=N(z)~E_-^0(z)$, where $N(z)$ is a $B(\mathfrak{X})$-valued entire function of the form $(\ref{P1})$ and $\mathfrak{E}^0(z)=(E_-^0(z),E_+^0(z))$ is a de Branges operator such that 
\begin{enumerate}
\item $E_\pm^0(z)$ are invertible for all $z\in\mathbb{R}$, and
\item The equality $\mathcal{B}(\mathfrak{E})=N\mathcal{B}(\mathfrak{E}^0)$ holds.
\end{enumerate}
\end{thm}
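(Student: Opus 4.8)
The plan is to apply Theorem~\ref{Factorization} to the single function $E_+$, obtaining a factorization $E_+(z)=N(z)E_+^0(z)$ in which $N$ has the form (\ref{P1}), is invertible off the real non-invertibility set of $E_+$, and $E_+^0$ is $\Phi(\mathfrak{X})$-valued and invertible for all real $z$. The heart of the matter is then to show that this \emph{same} $N$ also divides $E_-$, i.e. that peeling the factors $B_n\exp(\tilde{B}_n)$ of $N$ off $E_-$ produces entire functions $E_-^{(n+1)}$ converging, uniformly on bounded sets, to an entire $E_-^0$ with $E_-=NE_-^0$. Granting this, the isometric containment in (2) is immediate from Lemma~\ref{Isometry} applied with $P=N$, once we know that $\mathfrak{E}^0=(E_-^0,E_+^0)$ is again a de Branges operator.

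The only genuinely new point, and the main obstacle, is the entirety of each peeled factor for $E_-$. In the $n$-th step the construction of $N$ carries the orthogonal projection $P_n$ onto $(\mbox{rng}\,E_+^{(n)}(x_n))^\perp$, where $x_n$ is the real point at which $E_+^{(n)}$ first fails to be invertible; exactly as in the proof of Theorem~\ref{Factorization}, the peeled $E_-^{(n+1)}$ is entire provided $P_nE_-^{(n)}(x_n)=0$, that is provided $\mbox{rng}\,E_-^{(n)}(x_n)\subseteq\mbox{rng}\,E_+^{(n)}(x_n)$. This is where the de Branges structure enters: on the real line $E_+(x)E_+(x)^*=E_-(x)E_-(x)^*$, so the theorem of Douglas \cite{Douglas} yields $\mbox{rng}\,E_+(x)=\mbox{rng}\,E_-(x)$ for every $x\in\mathbb{R}$. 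The running partial product $N^{(n-1)}(x_n)$ is invertible, its only non-invertibility points being the already-treated $x_1,\dots,x_{n-1}$, and the identities $E_\pm(x_n)=N^{(n-1)}(x_n)E_\pm^{(n)}(x_n)$ transport this range equality down to $\mbox{rng}\,E_+^{(n)}(x_n)=\mbox{rng}\,E_-^{(n)}(x_n)$. Hence $P_n$ annihilates $E_-^{(n)}(x_n)$ and the peel is entire; the convergence $E_-^{(n+1)}\to E_-^0$ is then obtained from the very same norm estimates used in Theorem~\ref{Factorization}.

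Finally I would verify the three conditions of Hypothesis~\ref{H1} for $\mathfrak{E}^0$. Each $E_\pm^0$ is entire, and since $E_\pm(z)=N(z)E_\pm^0(z)$ with both $E_\pm(z)$ and $N(z)$ Fredholm, Proposition~\ref{FP} makes $E_\pm^0(z)\in\Phi(\mathfrak{X})$ for every $z$. As $N$ is invertible on $\mathbb{C}\setminus\mathbb{R}$, the invertibility of $E_+$ on $\mathbb{C}_+$ and of $E_-$ on $\mathbb{C}_-$ passes to $E_+^0$ (which equals $N^{-1}E_+$ there) and $E_-^0$ respectively; and on $\mathbb{C}_+$ one computes $(E_+^0)^{-1}E_-^0=E_+^{-1}N\,N^{-1}E_-=E_+^{-1}E_-\in\mathcal{S}^{in}\cap\mathcal{S}_*^{in}$. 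Thus $\mathfrak{E}^0$ is a de Branges operator. Since $E_+^0$ is invertible on all of $\mathbb{R}$ by construction, Lemma~\ref{Pole} applied to $\mathfrak{E}^0$ forces $E_-^0$ to be invertible on $\mathbb{R}$ as well, which is (1); and (2) follows from Lemma~\ref{Isometry} as noted above, completing the argument.
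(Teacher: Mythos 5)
Your overall strategy---peel common left factors off $E_+$ and $E_-$ inductively, obtain the needed range equality from Douglas's theorem, then verify Hypothesis \ref{H1} for the quotient pair and invoke Lemmas \ref{Pole} and \ref{Isometry}---is the same as the paper's, and your final paragraph (Fredholmness of $E_\pm^0$ via Proposition \ref{FP}, invertibility off $\mathbb{R}$, the computation $(E_+^0)^{-1}E_-^0=E_+^{-1}E_-$, condition (1) via Lemma \ref{Pole}, condition (2) via Lemma \ref{Isometry}) is correct. But the step you yourself call the only genuinely new point contains a genuine gap. You transport the equality $\mbox{rng}\,E_+(x_n)=\mbox{rng}\,E_-(x_n)$ down to the peeled pair through the identities $E_\pm(x_n)=N^{(n-1)}(x_n)E_\pm^{(n)}(x_n)$, and for this you assert that $N^{(n-1)}(x_n)$ is invertible because its only non-invertibility points are the already-treated $x_1,\dots,x_{n-1}$. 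That presumes the points produced by the induction are pairwise distinct, which is false in general: peeling one factor at a point need not restore invertibility there, so the process may have to revisit the same point. Already in the scalar case (admissible here, $\mathfrak{X}=\mathbb{C}$), take $E_+(z)=(z-x_1)^2(z+i)$ and $E_-(z)=(z-x_1)^2(z-i)$ with $x_1\in\mathbb{R}\setminus\{0\}$: Hypothesis \ref{H1} holds, one peel at $x_1$ leaves a factor that still vanishes at $x_1$, so $x_2=x_1$ and $N^{(1)}(x_2)=0$. In that situation your transport argument gives nothing, and the crucial fact $P_nE_-^{(n)}(x_n)=0$ (entirety of the peeled factor of $E_-$) is left unproved.

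The repair is exactly what the paper does, and you already have all the ingredients: instead of applying Douglas only to the original pair and transporting, maintain as the inductive invariant that each peeled pair $(E_-^{(n)},E_+^{(n)})$ is itself a de Branges operator. Your own verification for the final pair works verbatim at every intermediate step: $N^{(n-1)}$ is invertible off $\mathbb{R}$, so on $\mathbb{C}_+\cup\mathbb{C}_-$ one has $E_\pm^{(n)}=(N^{(n-1)})^{-1}E_\pm$, whence $E_+^{(n)}$ is invertible on $\mathbb{C}_+$, $E_-^{(n)}$ on $\mathbb{C}_-$, and $(E_+^{(n)})^{-1}E_-^{(n)}=E_+^{-1}E_-\in\mathcal{S}^{in}\cap\mathcal{S}_*^{in}$. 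Douglas's theorem applied to the identity $E_+^{(n)}(x)E_+^{(n)}(x)^*=E_-^{(n)}(x)E_-^{(n)}(x)^*$ then gives $\mbox{rng}\,E_+^{(n)}(x)=\mbox{rng}\,E_-^{(n)}(x)$ for every $x\in\mathbb{R}$, in particular at $x_n$, with no assumption that $x_n$ is a new point. With this replacement your proof closes and coincides with the paper's argument.
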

\begin{proof}
Since $\mathcal{B}(\mathfrak{E})$ is a de Branges space, Lemma \ref{Pole} implies that the real points where both $E_+$ and $E_-$ are not invertible are the same, and we denote the collection as $D$. Now, we want the factorization of $E_+$ and $E_-$ as in the Theorem \ref{Factorization}. Without loss of generality, we may assume that  $E_+$ and $E_-$ are invertible at the origin. Suppose $x_1$ is an element in $D$ nearest to the origin, and $P_1$ is the orthogonal projection operator on $(\mathrm{rng}E_+(x_1))^\perp=(\mathrm{rng}E_-(x_1))^\perp$. Then $E_+(z)=N^{(1)}(z)S_2(z)$ and $E_-(z)=N^{(1)}(z)T_2(z)$, where $N^{(1)}(z)$ as in the previous theorem, 
\begin{equation}
    S_2(z):= \left\{
    \begin{array}{ll}
        \exp[-\frac{z}{x_1}P_1][((z-x_1)I-zP_1)[\frac{E_+(z)-E_+(x_1)}{z-x_1}]+E_+(x_1)]  , z \neq x_1 \\
         \exp(-P_1)[E_+(x_1)-x_1P_1E_+'(x_1)]  , z = x_1
    \end{array} \right.
\end{equation}
and
\begin{equation}
    T_2(z):= \left\{
    \begin{array}{ll}
        \exp[-\frac{z}{x_1}P_1][((z-x_1)I-zP_1)[\frac{E_-(z)-E_-(x_1)}{z-x_1}]+E_-(x_1)]  , z \neq x_1 \\
         \exp(-P_1)[E_-(x_1)-x_1P_1E_-'(x_1)]  , z = x_1.
    \end{array} \right.
\end{equation}
Here both $S_2(z)$ and $T_2(z)$ are $\Phi{(\mathfrak{X}})$-valued entire functions and $S_2(0)=E_+(0)$, $T_2(0)=E_-(0)$. Also, from the factorizations of $E_+(z)$ and $E_-(z)$, it is clear that $S_2(z)$ is invertible for all $z\in\mathbb{C}_+$, and $T_2(z)$ is invertible for all $z\in\mathbb{C}_-$. Then for all $z\in\mathbb{C}_+$ and for almost every $z\in\mathbb{R}$, we have
\begin{align*}
E_+(z)^{-1}E_-(z)&=S_2(z)^{-1}N^{(1)}(z)^{-1}N^{(1)}(z)T_2(z)\\
&=S_2(z)^{-1}T_2(z).
\end{align*}
This implies that $S_2$ and $T_2$ satisfy Hypothesis \ref{H1}. Thus for all $x\in\mathbb{R}$, again we have $\mathrm{rng}S_2(x)=\mathrm{rng}T_2(x)$. Now, as in the previous theorem, we continue factoring $E_+(z)$ and $ E_-(z)$ inductively. Observe that in every inductive step, the first factors of $E_+(z)$ and $E_-(z)$ are the same, and the second factors satisfy hypothesis \ref{H1}. Finally, whether $D$ is finite or infinite, we have the factorization of $E_+$ and $E_-$ as
\begin{equation}
\label{dB operator factorization}
E_+(z)=N(z)E_+^0(z),\hspace{.5cm}E_-(z)=N(z)E_-^0(z),
\end{equation}
where $N(z)$ as in the previous theorem and $E_+^0$, $E_-^0$ satisfy hypothesis \ref{H1}. Indeed, $E_+(z)$ is invertible for all $z\in\mathbb{C}_+$, and $N(z)$ is invertible for all $z\not\in D\subset\mathbb{R}$. Thus $E_+^0(z)$ is invertible for all $z\in\mathbb{C}_+$. Similarly, $E_-^0(z)$ is invertible for all $z\in\mathbb{C}_-$. This implies $\mathcal{B}(\mathfrak{E}^0)$ exists with the de Branges operator $\mathfrak{E}^0(z)=(E_-^0(z),E_+^0(z))$. Now, $(\ref{dB operator factorization})$ and Lemma \ref{Isometry} together imply that $N\mathcal{B}(\mathfrak{E}^0)$ is isometrically contained in $\mathcal{B}(\mathfrak{E})$. Moreover, since the zeros of $N(z)$ are only the real zeros of $E_+(z)$ (same as $E_-(z)$), given any $f\in\mathcal{B}(\mathfrak{E})$, $N(z)^{-1}f(z)\in\mathcal{B}(\mathfrak{E}^0)$. Hence, the equality $\mathcal{B}(\mathfrak{E})=N\mathcal{B}(\mathfrak{E}^0)$ holds.
\end{proof}
\begin{rmk}
Observe that the two de Branges spaces $\mathcal{B}(\mathfrak{E})$ and $\mathcal{B}(\mathfrak{E}^0)$ involved in the previous theorem are in the sense of our present consideration, i.e., the components of the corresponding de Branges operators are in the class of Fredholm operator valued entire functions which need not be of the form considered in \cite{Rovnyak}.
\end{rmk}

\section{Associated functions and multiplication operator in de Branges spaces}
\label{Sec-6}
This section deals with the $B(\mathfrak{X})$-valued entire functions associated with a de Branges space $\mathcal{B}(\mathfrak{E})$, where $\mathfrak{E}(z)=(E_-(z),E_+(z))$ is the de Branges operator. The relationship between these associated functions and the multiplication operator $\mathfrak{T}$ is also explored. It is worth noting that a discussion of associated functions for scalar valued de Branges spaces of entire functions can be found in \cite{Branges 4} (second chapter, section $25$). A similar discussion employing a different approach within the framework of de Branges spaces of entire functions with matrix valued reproducing kernels can also be found in \cite{ArD08} and \cite{ArD18}. Moreover, \cite{Branges 5} examines operator valued associated functions $S(z)$, where $I-S(z)$ are compact operators for all $z\in\mathbb{C}$. Recall that a $B(\mathfrak{X})$-valued entire function $S(z)$ is said to be associated with the de Branges space $\mathcal{B}(\mathfrak{E})$ if $S(\alpha)$ is invertible for some $\alpha\in\mathbb{C}$ and for every $f\in\mathcal{B}(\mathfrak{E})$,
$$\frac{f(z)-S(z)S(\alpha)^{-1}f(\alpha)}{z-\alpha}\in\mathcal{B}(\mathfrak{E}).$$
\begin{rmk}
If $\alpha\in\mathbb{C}$ is such that $K_\alpha(\alpha)$ is invertible, then $S(z)=K_\alpha(z)$ is associated with $\mathcal{B}(\mathfrak{E})$ 
\begin{enumerate}
\item for all $\alpha\in\mathbb{C}\setminus \mathbb{R}$, and
\item for all $\alpha\in\mathbb{R}$, where $E_+(\alpha)$ is invertible.
\end{enumerate}
\end{rmk}
\begin{rmk}
Observe that $S(z)=I$ is associated with $\mathcal{B}(\mathfrak{E})$ if and only if $\mathcal{B}(\mathfrak{E})$ is invariant under the generalized backward shift operator $R_\alpha$ for every $\alpha\in\mathbb{C}$.
\end{rmk}
The following theorem gives a sufficient condition for a $B(\mathfrak{X})$-valued entire function to be associated with the de Branges space $\mathcal{B}(\mathfrak{E})$.
\begin{thm}
\label{AFS}
Let $\mathcal{B}(\mathfrak{E})$ be a vector valued de Branges space corresponding to the de Branges operator $\mathfrak{E}(z)=(E_-(z),E_+(z))$ and $S(z)$ be a $B(\mathfrak{X})$-valued entire function such that 
\begin{equation}
\label{Associated function sufficient}
\frac{E_+^{-1}S}{\rho_i}u\in H^2_{\mathfrak{X}}(\mathbb{C}_+)~~\mbox{and}~~\frac{E_-^{-1}S}{\rho_{-i}}u\in H^2_{\mathfrak{X}}(\mathbb{C}_+)^\perp \hspace{.3cm}\mbox{for all}~u\in\mathfrak{X}.
\end{equation}
 Then 
\begin{enumerate}
\item $E_+^{-1}S$ is analytic in $\overline{\mathbb{C}_+}$.
\item $E_-^{-1}S$ is analytic in $\overline{\mathbb{C}_-}$.
\item Moreover, if $S(\alpha)$ is invertible for some $\alpha\in\mathbb{C}$, the linear transformation $R_S(\alpha):\mathcal{B}(\mathfrak{E})\to \mathcal{B}(\mathfrak{E})$ defined by
\begin{equation}
f(z)\mapsto \frac{f(z)-S(z)S(\alpha)^{-1}f(\alpha)}{z-\alpha},\label{Node}
\end{equation}
is everywhere defined bounded linear operator on $\mathcal{B}(\mathfrak{E})$.
\end{enumerate}
\end{thm}
\begin{proof}
Since $\frac{E_+^{-1}S}{\rho_i}u\in H^2_{\mathfrak{X}}(\mathbb{C}_+)$ for all $u\in\mathfrak{X}$, no $z\in\overline{\mathbb{C}_+}$ is a pole of $E_+^{-1}Su$. Now, due to Fredholm analytic theorem, it is clear that $E_+^{-1}Su$ is analytic in $\overline{\mathbb{C}_+}$ for all $u\in\mathfrak{X}$. Equivalently, $E_+^{-1}S$ is analytic in $\overline{\mathbb{C}_+}$. Similarly, it can be proved that $E_-^{-1}S$ is analytic in $\overline{\mathbb{C}_-}$.\\
Now, it remains to prove $(3)$. Suppose $f\in \mathcal{B}(\mathfrak{E})$ and $\alpha\in\mathbb{C}$ is such that $S(\alpha)^{-1}\in B(\mathfrak{X})$. Assume that
 $$g(z)=E_+^{-1}(z)[f(z)-S(z)S(\alpha)^{-1}f(\alpha)]$$ and $$h(z)=E_-^{-1}(z)[f(z)-S(z)S(\alpha)^{-1}f(\alpha)].$$
Since $f\in\mathcal{B}(\mathfrak{E})$, we have $E_+^{-1}f\in H^2_\mathfrak{X}(\mathbb{C}_+)$, which implies $\frac{E_+^{-1}f}{\rho_i}\in H^2_\mathfrak{X}(\mathbb{C}_+)$. Due to (\ref{Associated function sufficient}), it is also true that  $\frac{E_+^{-1}S}{\rho_i}S(\alpha)^{-1}f(\alpha)\in H^2_\mathfrak{X}(\mathbb{C}_+)$. Thus, we have $\frac{g}{\rho_i}\in H^2_\mathfrak{X}(\mathbb{C}_+)$. Similarly, it can be proved that $\frac{h}{\rho_{-i}}\in H^2_\mathfrak{X}(\mathbb{C}_+)^\perp$.\\
If $\alpha\in\overline{\mathbb{C}_+}$, then 
$$E_+^{-1}\frac{f(z)-S(z)S(\alpha)^{-1}f(\alpha)}{z-\alpha}=R_\alpha g\in H^2_\mathfrak{X}(\mathbb{C}_+),$$
as $\frac{g}{\rho_i}\in H^2_\mathfrak{X}(\mathbb{C}_+)$. If $\alpha\in\mathbb{C}_-$, then
$$E_+^{-1}\frac{f(z)-S(z)S(\alpha)^{-1}f(\alpha)}{z-\alpha}=\frac{\rho_i(z)}{z-\alpha}E_+^{-1}(z)\frac{f(z)-S(z)S(\alpha)^{-1}f(\alpha)}{\rho_i(z)}$$
belongs to $H^2_\mathfrak{X}(\mathbb{C}_+)$ as $\frac{\rho_i(z)}{z-\alpha}\in H^\infty$. Similarly, it can be proved that, for all $\alpha\in\mathbb{C}$ and $f\in \mathcal{B}(\mathfrak{E})$,
$$E_-^{-1}\frac{f(z)-S(z)S(\alpha)^{-1}f(\alpha)}{z-\alpha}\in H^2_\mathfrak{X}(\mathbb{C}_+)^\perp.$$
Thus, it is clear that $R_S(\alpha)$ is an everywhere defined linear transformation on $\mathcal{B}(\mathfrak{E})$. The boundedness of $R_S(\alpha)$ can be proved by the closed graph theorem.
\end{proof}
Given a $B(\mathfrak{X})$-valued entire function $S(z)$, we denote $\rho_S=\{z\in\mathbb{C}:S(z)^{-1}\in B(\mathfrak{X})\}$. Then, the preceding theorem implies that if $S(z)$ is an associated function of $\mathcal{B}(\mathfrak{E})$, $R_S(z)$ is a bounded linear operator for all $z\in\rho_S$. Also it is satisfying the following resolvent identity
\begin{equation}
\label{Resolvent identity}
R_S(\alpha)-R_S(\beta)=(\alpha-\beta)R_S(\alpha)R_S(\beta)\hspace{.3cm}\mbox{for any}~\alpha, \beta\in\rho_S.
\end{equation}
Note that if the associated function $S(z)$ is $\Phi(\mathfrak{X})$-valued entire function, then $\rho_S=\mathbb{C}\setminus D$, where $D$ is a discrete set. For example, we can consider $E_+(z)$ and $E_-(z)$. 
The next theorem gives a converse result of the previous theorem.
\begin{thm}
Let $\mathcal{B}(\mathfrak{E})$ be a vector valued de Branges space corresponding to the de Branges operator $\mathfrak{E}(z)=(E_-(z),E_+(z))$ and $S(z)$ be a $B(\mathfrak{X})$-valued entire function. Suppose $K_\alpha(\alpha)$ is invertible for some number $\alpha\in\mathbb{C}$ and for every $f\in \mathcal{B}(\mathfrak{E})$ 
$$\frac{f(z)-S(z)S(\alpha)^{-1}f(\alpha)}{z-\alpha}\in\mathcal{B}(\mathfrak{E}).$$
Then $(\ref{Associated function sufficient})$ holds.
\end{thm}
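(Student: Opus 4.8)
The plan is to reverse the construction behind Theorem \ref{AFS}: instead of starting from the Hardy-space memberships, I will realize the vector $S(z)u$ as a difference of two elements of $\mathcal{B}(\mathfrak{E})$ and then read off the two conclusions in (\ref{Associated function sufficient}). Fix $u\in\mathfrak{X}$. Since $K_\alpha(\alpha)$ is invertible, the function $f:=K_\alpha K_\alpha(\alpha)^{-1}S(\alpha)u$ lies in $\mathcal{B}(\mathfrak{E})$ and satisfies $f(\alpha)=K_\alpha(\alpha)K_\alpha(\alpha)^{-1}S(\alpha)u=S(\alpha)u$ (this is the reproducing property, equivalently part $(1)$ of the first lemma of Section \ref{Sec-2}). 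Applying the hypothesis to this particular $f$, the function
$$g(z):=\frac{f(z)-S(z)S(\alpha)^{-1}f(\alpha)}{z-\alpha}=\frac{f(z)-S(z)u}{z-\alpha}$$
belongs to $\mathcal{B}(\mathfrak{E})$. Hence $S(z)u=f(z)-(z-\alpha)g(z)$ with both $f,g\in\mathcal{B}(\mathfrak{E})$, so that $E_+^{-1}f,\,E_+^{-1}g\in H^2_\mathfrak{X}(\mathbb{C}_+)$ and $E_-^{-1}f,\,E_-^{-1}g\in H^2_\mathfrak{X}(\mathbb{C}_+)^\perp$.

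For the first relation in (\ref{Associated function sufficient}) I would decompose
$$\frac{E_+^{-1}(z)S(z)u}{\rho_i(z)}=\frac{E_+^{-1}(z)f(z)}{\rho_i(z)}-\frac{z-\alpha}{\rho_i(z)}\,E_+^{-1}(z)g(z).$$
The first summand lies in $H^2_\mathfrak{X}(\mathbb{C}_+)$ by Lemma \ref{P45}, applied to $E_+^{-1}f\in H^2_\mathfrak{X}(\mathbb{C}_+)$. For the second summand, observe that $\rho_i(z)=-2\pi i(z+i)$ has its only zero at $-i\in\mathbb{C}_-$, so the scalar function $\frac{z-\alpha}{\rho_i(z)}$ is analytic on $\overline{\mathbb{C}_+}$ and bounded there (it tends to a constant at infinity and its pole avoids the closed upper half-plane); it therefore acts as a bounded analytic multiplier and carries $E_+^{-1}g\in H^2_\mathfrak{X}(\mathbb{C}_+)$ back into $H^2_\mathfrak{X}(\mathbb{C}_+)$. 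Being a difference of two $H^2_\mathfrak{X}(\mathbb{C}_+)$ functions, $\frac{E_+^{-1}S}{\rho_i}u\in H^2_\mathfrak{X}(\mathbb{C}_+)$.

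The second relation is handled symmetrically on the lower half-plane. I would write
$$\frac{E_-^{-1}(z)S(z)u}{\rho_{-i}(z)}=\frac{E_-^{-1}(z)f(z)}{\rho_{-i}(z)}-\frac{z-\alpha}{\rho_{-i}(z)}\,E_-^{-1}(z)g(z),$$
invoke the lower half-plane part of Lemma \ref{P45} for the first summand, and note that $\rho_{-i}(z)=-2\pi i(z-i)$ has its zero at $i\in\mathbb{C}_+$, so $\frac{z-\alpha}{\rho_{-i}(z)}$ is bounded and analytic on $\overline{\mathbb{C}_-}$ and acts as a bounded multiplier on $H^2_\mathfrak{X}(\mathbb{C}_+)^\perp$. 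This gives $\frac{E_-^{-1}S}{\rho_{-i}}u\in H^2_\mathfrak{X}(\mathbb{C}_+)^\perp$, and since $u\in\mathfrak{X}$ was arbitrary, (\ref{Associated function sufficient}) follows.

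I do not expect a serious obstacle; the argument is essentially a bookkeeping reversal of Theorem \ref{AFS}, and it is in fact cleaner because no case split on the location of $\alpha$ is needed. The one point demanding care is verifying that the rational prefactors $\frac{z-\alpha}{\rho_{\pm i}}$ sit in the correct one-sided $H^\infty$ class: this rests entirely on placing the single pole ($-i\in\mathbb{C}_-$ for $\rho_i$, and $i\in\mathbb{C}_+$ for $\rho_{-i}$) in the half-plane opposite to the one on which membership is tested, and crucially this placement, together with the boundedness of $\frac{z-\alpha}{z\pm i}$ at infinity, is independent of where $\alpha$ lies. A minor preliminary check is that $S(\alpha)$ is genuinely invertible, so that $S(\alpha)^{-1}$ in the hypothesis is meaningful and the equation $f(\alpha)=S(\alpha)u$ can be solved for every $u\in\mathfrak{X}$.
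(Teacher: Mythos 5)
Your proof is correct and follows essentially the same route as the paper's: the same decomposition $S(z)u = f(z)-(z-\alpha)g(z)$ with $f$ and the difference quotient $g$ both in $\mathcal{B}(\mathfrak{E})$, the same bounded analytic multipliers $\frac{z-\alpha}{\rho_{\pm i}}$, and the same appeal to the invertibility of $K_\alpha(\alpha)$. The only organizational difference is that you instantiate the surjectivity of evaluation at $\alpha$ up front by choosing $f=K_\alpha K_\alpha(\alpha)^{-1}S(\alpha)u$ explicitly, whereas the paper argues with an arbitrary $f\in\mathcal{B}(\mathfrak{E})$ and invokes $\{f(\alpha):f\in\mathcal{B}(\mathfrak{E})\}=\mbox{rng}\,K_\alpha(\alpha)=\mathfrak{X}$ at the very end.
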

\begin{proof}
Suppose $f\in \mathcal{B}(\mathfrak{E})$, then $E_+^{-1}f\in H^2_\mathfrak{X}(\mathbb{C}_+)$ and $E_-^{-1}f\in H^2_\mathfrak{X}(\mathbb{C}_+)^\perp$, which enforce that $\frac{E_+^{-1}f}{\rho_i}\in H^2_\mathfrak{X}(\mathbb{C}_+)$ and $\frac{E_-^{-1}f}{\rho_{-i}}\in H^2_\mathfrak{X}(\mathbb{C}_+)^\perp$. Also $\frac{f(z)-S(z)S(\alpha)^{-1}f(\alpha)}{z-\alpha}\in\mathcal{B}(\mathfrak{E})$ implies that $E_+^{-1}\frac{f(z)-S(z)S(\alpha)^{-1}f(\alpha)}{z-\alpha}\in H^2_\mathfrak{X}(\mathbb{C}_+)$ and $E_-^{-1}\frac{f(z)-S(z)S(\alpha)^{-1}f(\alpha)}{z-\alpha}\in H^2_\mathfrak{X}(\mathbb{C}_+)^\perp$. Now, assume $g(z)=E_+^{-1}(z)[f(z)-S(z)S(\alpha)^{-1}f(\alpha)]$ and $h(z)=E_-^{-1}(z)[f(z)-S(z)S(\alpha)^{-1}f(\alpha)]$. Thus
\begin{align*}
\frac{g(z)}{\rho_i(z)}&=\frac{E_+^{-1}(z)[f(z)-S(z)S(\alpha)^{-1}f(\alpha)]}{\rho_i(z)}\\
&= \frac{(z-\alpha)}{\rho_i(z)}\frac{E_+^{-1}(z)[f(z)-S(z)S(\alpha)^{-1}f(\alpha)]}{z-\alpha}.
\end{align*}
This implies $\frac{g}{\rho_i}\in H^2_\mathfrak{X}(\mathbb{C}_+)$ as $\frac{z-\alpha}{\rho_i(z)}\in H^\infty$ and $E_+^{-1}\frac{f(z)-S(z)S(\alpha)^{-1}f(\alpha)}{z-\alpha}\in H^2_\mathfrak{X}(\mathbb{C}_+)$. Now, we have  $\frac{g}{\rho_i},\frac{E_+^{-1}f}{\rho_i}\in H^2_\mathfrak{X}(\mathbb{C}_+)$. Thus $\frac{E_+^{-1}S}{\rho_i}S(\alpha)^{-1}f(\alpha)\in H^2_\mathfrak{X}(\mathbb{C}_+)$ for all $f\in \mathcal{B}(\mathfrak{E})$. Since $\{f(\alpha):f\in \mathcal{B}(\mathfrak{E})\}=\mathrm{rng}K_\alpha(\alpha)$ and $K_\alpha(\alpha)$ is invertible, $\frac{E_+^{-1}S}{\rho_i}u\in H^2_{\mathfrak{X}}(\mathbb{C}_+)$ for all $u\in\mathfrak{X}$. Similarly, it can be proved that $\frac{E_-^{-1}S}{\rho_{-i}}S(\alpha)^{-1}f(\alpha)\in H^2_\mathfrak{X}(\mathbb{C}_+)^\perp$ for all $f\in \mathcal{B}(\mathfrak{E})$. Thus $\frac{E_-^{-1}S}{\rho_{-i}}u\in H^2_{\mathfrak{X}}(\mathbb{C}_+)^\perp$ for all $u\in\mathfrak{X}$.
\end{proof}
At this point, we can conclude a result regarding the invariance of vector valued de Branges spaces under the generalized backward shift operator. We write the result as a theorem below, whose proof is a particular case of the previous two theorems.
\begin{thm}
Suppose $\mathcal{B}(\mathfrak{E})$ is a vector valued de Branges space corresponding to the de Branges operator $\mathfrak{E}(z)=(E_-(z),E_+(z))$. Then $\mathcal{B}(\mathfrak{E})$ is invariant under the generalized backward shift operator $R_\alpha$ for all $\alpha\in\mathbb{C}$ if
$$\frac{E_+^{-1}}{\rho_i}u\in H^2_{\mathfrak{X}}(\mathbb{C}_+)\hspace{.3cm}\mbox{and}\hspace{.3cm}\frac{E_-^{-1}}{\rho_{-i}}u\in H^2_{\mathfrak{X}}(\mathbb{C}_+)^\perp \hspace{.5cm}\mbox{for all}~u\in\mathfrak{X}.$$
Conversely, if there exists a number $\alpha\in\mathbb{C}$ such that $K_\alpha(\alpha)$ is inertible and $R_\alpha \mathcal{B}(\mathfrak{E})\subseteq \mathcal{B}(\mathfrak{E})$, then
$$\frac{E_+^{-1}}{\rho_i}u\in H^2_{\mathfrak{X}}(\mathbb{C}_+)\hspace{.3cm}\mbox{and}\hspace{.3cm}\frac{E_-^{-1}}{\rho_{-i}}u\in H^2_{\mathfrak{X}}(\mathbb{C}_+)^\perp\hspace{.5cm} \mbox{for all}~u\in\mathfrak{X}.$$
\end{thm}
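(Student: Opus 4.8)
The plan is to read this statement as the special case $S(z)=I_\mathfrak{X}$ of Theorem \ref{AFS} and of its converse, and simply to record how the data of those theorems degenerate when $S\equiv I_\mathfrak{X}$. The one identity on which everything hinges is that, for $S\equiv I_\mathfrak{X}$, the associated operator $R_S(\alpha)$ of Theorem \ref{AFS} is exactly the generalized backward shift $R_\alpha$: substituting $S(z)=I_\mathfrak{X}$ and $S(\alpha)^{-1}=I_\mathfrak{X}$ into the defining assignment $f\mapsto \frac{f(z)-S(z)S(\alpha)^{-1}f(\alpha)}{z-\alpha}$ produces $\frac{f(z)-f(\alpha)}{z-\alpha}=(R_\alpha f)(z)$. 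Correspondingly, condition (\ref{Associated function sufficient}) collapses to precisely the two memberships $\frac{E_+^{-1}}{\rho_i}u\in H^2_\mathfrak{X}(\mathbb{C}_+)$ and $\frac{E_-^{-1}}{\rho_{-i}}u\in H^2_\mathfrak{X}(\mathbb{C}_+)^\perp$ appearing in the present statement.

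For the forward (``if'') direction I would first observe that $S(z)=I_\mathfrak{X}$ is a $B(\mathfrak{X})$-valued entire function with $S(\alpha)=I_\mathfrak{X}$ invertible for every $\alpha\in\mathbb{C}$, so that $\rho_S=\mathbb{C}$. Granting the two displayed memberships, Theorem \ref{AFS}(3) then applies for each $\alpha\in\mathbb{C}$ and yields that $R_S(\alpha)=R_\alpha$ is an everywhere-defined bounded operator on $\mathcal{B}(\mathfrak{E})$; in particular $R_\alpha f\in\mathcal{B}(\mathfrak{E})$ for all $f\in\mathcal{B}(\mathfrak{E})$ and all $\alpha\in\mathbb{C}$, which is exactly the asserted invariance.

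For the converse I would check that the hypotheses of the converse theorem hold with $S(z)=I_\mathfrak{X}$. The invertibility of $K_\alpha(\alpha)$ for some $\alpha$ is assumed verbatim, while the remaining hypothesis that $\frac{f(z)-S(z)S(\alpha)^{-1}f(\alpha)}{z-\alpha}\in\mathcal{B}(\mathfrak{E})$ for every $f$ reduces, by the same substitution, to the requirement that $(R_\alpha f)(z)\in\mathcal{B}(\mathfrak{E})$ for every $f$, i.e. to the standing hypothesis $R_\alpha\mathcal{B}(\mathfrak{E})\subseteq\mathcal{B}(\mathfrak{E})$. The converse theorem then delivers (\ref{Associated function sufficient}) for $S=I_\mathfrak{X}$, which is the pair of membership conclusions sought.

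Because the statement is a transparent specialization, I do not anticipate a genuine obstacle; the only point requiring care is bookkeeping, namely confirming that $R_S(\alpha)$ truly collapses to $R_\alpha$ and that $\rho_S=\mathbb{C}$, so that the forward conclusion may be drawn simultaneously for every $\alpha$ rather than for a single one. No fresh analysis beyond the two source theorems is needed.
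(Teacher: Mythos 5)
Your proposal is correct and is exactly the paper's own argument: the paper proves this theorem simply by noting it is the particular case $S(z)=I_{\mathfrak{X}}$ of Theorem \ref{AFS} and its converse, which is precisely the specialization you carry out (including the key bookkeeping that $R_S(\alpha)$ reduces to $R_\alpha$ and that $\rho_S=\mathbb{C}$).
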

Now, the following proposition connects the associated functions of $\mathcal{B}(\mathfrak{E})$ and the multiplication operator $\mathfrak{T}$ in terms of closed linear relations on $\mathcal{B}(\mathfrak{E})$. A linear relation from $\mathcal{B}(\mathfrak{E})$  to $\mathcal{B}(\mathfrak{E})$  is nothing but a linear subspace of $\mathcal{B}(\mathfrak{E})\times\mathcal{B}(\mathfrak{E})$. For a more general discussion about linear relations, refer to \cite{Behrndt} and their connection to de Branges spaces with matrix valued reproducing kernels, see \cite{ArD20}.
\begin{prop}
Let $\mathcal{B}(\mathfrak{E})$ be a vector valued de Branges space corresponding to the de Branges operator $\mathfrak{E}(z)=(E_-(z),E_+(z))$ and $S(z)$ is a $\Phi(\mathfrak{X})$-valued entire function associated to it. Then there exists a closed linear relation $T$ on $\mathcal{B}(\mathfrak{E})$ such that $\mathfrak{T}\subseteq T $ and the following identity holds
$$(T-\alpha I)^{-1}f(z)=R_S(\alpha)f(z)\hspace{.3cm}\mbox{for all}~f\in \mathcal{B}(\mathfrak{E})~\mbox{and}~\alpha\in\rho_S.$$
\end{prop}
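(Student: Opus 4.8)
The plan is to build $T$ directly from the operator family $\{R_S(\alpha)\}_{\alpha\in\rho_S}$, exploiting the resolvent identity (\ref{Resolvent identity}) that this family already satisfies. Fix any $\alpha\in\rho_S$ and define the linear relation
$$T=\{(R_S(\alpha)g,\; g+\alpha R_S(\alpha)g):g\in\mathcal{B}(\mathfrak{E})\}\subseteq\mathcal{B}(\mathfrak{E})\times\mathcal{B}(\mathfrak{E}).$$
This is manifestly a linear subspace. The first thing I would establish is that $T$ does not depend on the particular $\alpha$ chosen; this is where the resolvent identity does the real work.

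Given $\beta\in\rho_S$ and $h\in\mathcal{B}(\mathfrak{E})$, I would set $g=[I-(\alpha-\beta)R_S(\beta)]h$ and use (\ref{Resolvent identity}) in the rewritten form $R_S(\beta)=R_S(\alpha)[I-(\alpha-\beta)R_S(\beta)]$ to check that $R_S(\alpha)g=R_S(\beta)h$ and $g+\alpha R_S(\alpha)g=h+\beta R_S(\beta)h$. This shows the generating pair $(R_S(\beta)h,\,h+\beta R_S(\beta)h)$ already lies in $T$, and by symmetry the $\alpha$-description and the $\beta$-description of $T$ coincide, so $T$ is well defined.

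Next I would read off the resolvent formula and the inclusion of $\mathfrak{T}$. By the definitions of $T-\alpha I$ and its inverse as linear relations, a pair $(x,y)=(R_S(\alpha)g,\,g+\alpha R_S(\alpha)g)\in T$ contributes $(y-\alpha x,x)=(g,R_S(\alpha)g)$ to $(T-\alpha I)^{-1}$; since $g$ ranges over all of $\mathcal{B}(\mathfrak{E})$, this says precisely that $(T-\alpha I)^{-1}=R_S(\alpha)$, an everywhere-defined bounded operator by Theorem \ref{AFS}(3). To see $\mathfrak{T}\subseteq T$, I would take $f\in\mathrm{dom}(\mathfrak{T})$, so that both $f$ and $zf$ lie in $\mathcal{B}(\mathfrak{E})$, and put $g=(z-\alpha)f\in\mathcal{B}(\mathfrak{E})$. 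Since $g(\alpha)=0$, the term involving $S(\alpha)^{-1}$ drops out and $R_S(\alpha)g=g/(z-\alpha)=f$, whence the generating pair is $(f,\,g+\alpha f)=(f,zf)=(f,\mathfrak{T}f)\in T$.

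Finally, closedness of $T$: if $(x_n,y_n)\in T$ with $(x_n,y_n)\to(x,y)$, I would write $x_n=R_S(\alpha)g_n$ and $y_n=g_n+\alpha R_S(\alpha)g_n$, so that $g_n=y_n-\alpha x_n\to y-\alpha x=:g$; boundedness of $R_S(\alpha)$ then gives $x=R_S(\alpha)g$ and $y=g+\alpha R_S(\alpha)g$, so $(x,y)\in T$. The main obstacle here is conceptual rather than computational: one must recognize that the resolvent identity (\ref{Resolvent identity}) is exactly the consistency condition guaranteeing that the $\alpha$-dependent recipe produces a single well-defined relation, and that the boundedness of $R_S(\alpha)$ from Theorem \ref{AFS}(3) is what lets $g=y-\alpha x$ be recovered and hence upgrades $T$ to a \emph{closed} relation whose resolvent is genuinely $R_S(\alpha)$.
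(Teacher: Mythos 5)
Your proposal is correct, and it differs from the paper's proof in one substantive way: where you construct the relation $T$ explicitly and verify all of its properties by hand, the paper simply invokes an abstract result from the literature. Concretely, the paper notes that $S$ being $\Phi(\mathfrak{X})$-valued and associated to $\mathcal{B}(\mathfrak{E})$ makes $\rho_S$ the complement of a discrete set, that each $R_S(\alpha)$ is bounded (Theorem \ref{AFS}), and that the family satisfies the resolvent identity (\ref{Resolvent identity}); it then cites Proposition $1.2.9$ of \cite{Behrndt}, which manufactures a closed linear relation $T$ with $\rho_S\subseteq\rho(T)$ and $(T-\alpha I)^{-1}=R_S(\alpha)$. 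Your construction $T=\{(R_S(\alpha)g,\,g+\alpha R_S(\alpha)g):g\in\mathcal{B}(\mathfrak{E})\}$, together with your verification of $\alpha$-independence via the resolvent identity, closedness via boundedness of $R_S(\alpha)$, and the computation $(T-\alpha I)^{-1}=R_S(\alpha)$, is in effect a self-contained proof of that cited proposition in this particular setting. For the inclusion $\mathfrak{T}\subseteq T$ the two arguments coincide in substance: the paper shows $(\mathfrak{T}-\alpha I)^{-1}\subseteq R_S(\alpha)$ by noting that $f\in\mbox{rng}(\mathfrak{T}-\alpha I)$ forces $f(\alpha)=0$ so the $S$-term drops out, while you run the same observation from the domain side with $g=(z-\alpha)f$. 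What the paper's route buys is brevity and a clean delegation of the relation-theoretic bookkeeping to \cite{Behrndt}; what your route buys is transparency — the reader sees exactly what $T$ is and why the resolvent identity is precisely the consistency condition making the construction $\alpha$-independent — at the cost of reproving a known general fact.
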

\begin{proof}
Since $S(z)$ is a $\Phi(\mathfrak{X})$-valued entire function and associated with $\mathcal{B}(\mathfrak{E})$, there exists a discrete set $D\subset \mathbb{C}$ such that $\rho_S=\mathbb{C}\setminus D$. Also, $R_S(\alpha)$ is a bounded linear operator on $\mathcal{B}(\mathfrak{E})$ for all $\alpha\in\rho_S$ and satisfies the resolvent identity $(\ref{Resolvent identity})$ for all $\alpha, \beta\in\rho_S$. Then, due to Proposition $1.2.9$ in \cite{Behrndt}, there exists a closed linear relation $T$ in $\mathcal{B}(\mathfrak{E})$ such that $\rho_S\subseteq \rho(T)$ and $(T-\alpha I)^{-1}=R_S(\alpha)$ for all $\alpha\in\rho_S$. Now, to show that $\mathfrak{T}\subseteq T $, we only need to show that $(\mathfrak{T}-\alpha I)^{-1}\subseteq R_S(\alpha)$. Observe that if $f\in\mathrm{rng}(\mathfrak{T}-\alpha I)$, then $f(\alpha)=0$. Thus
$$R_S(\alpha)f(z)=\frac{f(z)}{z-\alpha}=(\mathfrak{T}-\alpha I)^{-1}f(z).$$
\end{proof}
\begin{rmk}
In the last proposition, the closed linear relation $T$ is proper if and only if a linear manifold $M$ exists in $\mathfrak{X}$ such that $S(z)u\in \mathcal{B}(\mathfrak{E})$ for all $u\in M$. This can easily be proved by using Lemma $1.1.6$ in \cite{Behrndt}.
\end{rmk}
Note that the connection between associated functions of de Branges-Pontryagin spaces of $\mathbb{C}^n$-valued entire functions and extensions of the multiplication operator is explicitly discussed in Proposition $8.2$ of \cite{Derkach1} and Proposition $2.6$ of \cite{Derkach2}. This leads us to find a description of the set of all associated functions in terms of $E_-$ and $E_+$. In this direction, we present the following observation. However, a complete characterization of this set remains open.
\begin{prop}
Let $\mathcal{B}(\mathfrak{E})$ be a vector valued de Branges space corresponding to the de Branges operator $\mathfrak{E}(z)=(E_-(z),E_+(z))$, and suppose there exists $\alpha\in\mathbb{C}_+$ such that $K_\alpha(\alpha)$ is invertible. Then, for any unitary operator $V$ on $\mathfrak{X}$, the $B(\mathfrak{X})$-valued entire function $$S(z)=E_-(z)+E_+(z)V$$ is an associated function of $\mathcal{B}(\mathfrak{E})$.
\end{prop}
\begin{proof}
If $\alpha\in\mathbb{C}_+$ is such that $K_\alpha(\alpha)$ is invertible, then $||E_+^{-1}(\alpha)E_-(\alpha)||<1$. Consequently, $S(\alpha)$ is invertible. The rest of the proof then follows from Theorem \ref{AFS}. In fact, for any $u\in\mathfrak{X}$, we have
$$\frac{E_+^{-1}Su}{\rho_i}=\frac{E_+^{-1}(E_-+E_+V)u}{\rho_i}=E_+^{-1}E_-\frac{u}{\rho_i}+\frac{Vu}{\rho_i}\in H^2_{\mathfrak{X}}(\mathbb{C}_+),$$
and
$$\frac{E_-^{-1}Su}{\rho_{-i}}=\frac{E_-^{-1}(E_-+E_+V)u}{\rho_{-i}}=\frac{u}{\rho_{-i}}+E_-^{-1}E_+\frac{Vu}{\rho_{-i}}\in H^2_{\mathfrak{X}}(\mathbb{C}_+)^\perp.$$
\end{proof}
We conclude this section with the following proposition, which observes that if $S(z)$ is an associated function of $\mathcal{B}(\mathfrak{E}^0)$, then $N(z)S(z)$ is an associated function of $\mathcal{B}(\mathfrak{E})$. The proof of this proposition follows from Theorem \ref{Embedding} and Theorem \ref{AFS}.
\begin{prop}
Let $\mathcal{B}(\mathfrak{E})$, $\mathcal{B}(\mathfrak{E}^0)$, and $N(z)$ be as in Theorem \ref{Embedding}. $S(z)$ is a $B(\mathfrak{X})$-valued entire function such that 
$$\frac{(E_+^0)^{-1}S}{\rho_i}u\in H^2_{\mathfrak{X}}(\mathbb{C}_+)~~\mbox{and}~~\frac{(E_-^0)^{-1}S}{\rho_{-i}}u\in H^2_{\mathfrak{X}}(\mathbb{C}_+)^\perp \hspace{.3cm}\mbox{for all}~u\in\mathfrak{X}.$$
Then the following also holds
$$\frac{E_+^{-1}NS}{\rho_i}u\in H^2_{\mathfrak{X}}(\mathbb{C}_+)~~\mbox{and}~~\frac{E_-^{-1}NS}{\rho_{-i}}u\in H^2_{\mathfrak{X}}(\mathbb{C}_+)^\perp \hspace{.3cm}\mbox{for all}~u\in\mathfrak{X}.$$
\end{prop}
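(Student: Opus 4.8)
The plan is to reduce both assertions directly to the two hypotheses by means of the factorization furnished by Theorem \ref{Embedding}. Recall from there that $E_+(z)=N(z)E_+^0(z)$ and $E_-(z)=N(z)E_-^0(z)$ for all $z\in\mathbb{C}$, that $N(z)$ carries the properties of the factor in Theorem \ref{Factorization} (in particular $N(z)$ is invertible for every $z$ except on a discrete subset of $\mathbb{R}$), and that $\mathfrak{E}^0=(E_-^0,E_+^0)$ is a de Branges operator, so $E_+^0(z)$ is invertible for all $z\in\mathbb{C}_+$ and $E_-^0(z)$ is invertible for all $z\in\mathbb{C}_-$. Consequently, on the open upper half-plane all of $N(z)$, $E_+^0(z)$ and hence $E_+(z)=N(z)E_+^0(z)$ are invertible, and symmetrically on the open lower half-plane $N(z)$, $E_-^0(z)$ and $E_-(z)$ are invertible.

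First I would record the elementary cancellation identity. For $z\in\mathbb{C}_+$,
\begin{equation*}
E_+^{-1}(z)N(z)=\big(N(z)E_+^0(z)\big)^{-1}N(z)=(E_+^0)^{-1}(z)N(z)^{-1}N(z)=(E_+^0)^{-1}(z),
\end{equation*}
and likewise $E_-^{-1}(z)N(z)=(E_-^0)^{-1}(z)$ for $z\in\mathbb{C}_-$. Multiplying on the right by $S(z)u$ and dividing by the relevant scalar factor (which is nonvanishing on the half-plane in question) then yields, as identities of analytic $\mathfrak{X}$-valued functions,
\begin{equation*}
\frac{E_+^{-1}NS}{\rho_i}u=\frac{(E_+^0)^{-1}S}{\rho_i}u \quad \text{on } \mathbb{C}_+, \qquad \frac{E_-^{-1}NS}{\rho_{-i}}u=\frac{(E_-^0)^{-1}S}{\rho_{-i}}u \quad \text{on } \mathbb{C}_-.
\end{equation*}

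Next I would invoke the hypotheses. Since $\frac{(E_+^0)^{-1}S}{\rho_i}u\in H^2_\mathfrak{X}(\mathbb{C}_+)$ and $H^2_\mathfrak{X}(\mathbb{C}_+)$ is a space of $\mathfrak{X}$-valued functions analytic on $\mathbb{C}_+$, the function $\frac{E_+^{-1}NS}{\rho_i}u$, being equal to it throughout $\mathbb{C}_+$, lies in $H^2_\mathfrak{X}(\mathbb{C}_+)$ as well. Identifying $H^2_\mathfrak{X}(\mathbb{C}_+)^\perp$ with the Hardy space $H^2_\mathfrak{X}(\mathbb{C}_-)$ of functions analytic on the lower half-plane, the second hypothesis $\frac{(E_-^0)^{-1}S}{\rho_{-i}}u\in H^2_\mathfrak{X}(\mathbb{C}_+)^\perp$ together with the corresponding equality on $\mathbb{C}_-$ gives $\frac{E_-^{-1}NS}{\rho_{-i}}u\in H^2_\mathfrak{X}(\mathbb{C}_+)^\perp$ in the same fashion. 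As $u\in\mathfrak{X}$ is arbitrary, both required memberships follow.

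There is essentially no analytic obstacle here; the only point demanding care is the legitimacy of the cancellation $N^{-1}N=I$, which is why I first isolate the regions $\mathbb{C}_+$ and $\mathbb{C}_-$ where Theorem \ref{Factorization} and the de Branges operator property of $\mathfrak{E}^0$ together guarantee that $N$ and $E_\pm^0$ are simultaneously invertible. Because $N$ may fail to be invertible at a discrete set of real points, I would state the identities on the open half-planes rather than on $\mathbb{R}$; since membership in $H^2_\mathfrak{X}(\mathbb{C}_+)$ and in its orthogonal complement is governed by the behaviour on the open half-planes, the finitely many exceptional boundary points cause no difficulty.
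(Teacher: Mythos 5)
Your proof is correct and follows essentially the route the paper intends: the paper only remarks that the proposition "follows from Theorem \ref{Embedding} and Theorem \ref{AFS}," and the content of that remark is precisely your argument — substitute the factorizations $E_\pm=NE_\pm^0$ from Theorem \ref{Embedding}, cancel $N^{-1}N$ on the half-plane where $N$ and $E_\pm^0$ are invertible (the same cancellation the paper performs in the proof of Lemma \ref{Isometry}), and read off the Hardy-space memberships from the hypotheses. Your added care about restricting the identities to $\mathbb{C}_+$ and $\mathbb{C}_-$, away from the discrete real set where $N$ may fail to be invertible, is exactly the right justification and is consistent with how the paper handles the analogous step.
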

\section{Connection with the operator nodes}
\label{Sec-7}
In this section, we recall the idea of operator nodes and establish a connection with the vector valued de Branges spaces $\mathcal{B}(\mathfrak{E})$. A comprehensive study of operator nodes can be found in \cite{Brodskii}, and the connection with the de Branges spaces with matrix valued reproducing kernels can be found in \cite{ArD08}. Suppose $\mathfrak{H}$ and $\mathfrak{G}$ are two separable Hilbert spaces and $A\in B(\mathfrak{H})$, $T\in B(\mathfrak{H},\mathfrak{G})$, and $J$ is a signature operator in $B(\mathfrak{G})$, i.e., $$J=J^*=J^{-1}.$$
Then, the set of these Hilbert spaces and operators is called an operator node if
$$A-A^*=iT^*JT$$ 
and is denoted as 
$$\Theta=
\begin{pmatrix}
    A       & T & J  \\
    \mathfrak{H}       &  & \mathfrak{G}
\end{pmatrix}.
$$
Here, $\mathfrak{H}$ is called interior space, and $\mathfrak{G}$ is called exterior space. Also, $A$ is the basic operator, $T$ is the canal operator, and $J$ is the directing operator. The operator node $\Theta$ is called simple if $$\cap_{n=0}^\infty \ker(TA^n)=\{0\}$$
and is called dissipative if $J=I_\mathfrak{G}$, the identity operator on $\mathfrak{G}$. The characteristic function of the operator node $\Theta$ is the operator valued function
$$W_\Theta(z)=I_\mathfrak{G}+izT(I_\mathfrak{H}-zA)^{-1}T^*J\hspace{.3cm}\mbox{for all}~z\in Z_A.$$
\begin{thm}
\label{Operator node connection}
Let $\mathfrak{E}(z)=(E_-(z),E_+(z))$ be a de Branges operator such that $E_+(0)=E_-(0)=I$ and $F=E_+^{-1}E_-$. Then, $F$ is the characteristic operator function of the simple, dissipative operator node 
$$\Theta=
\begin{pmatrix}
    R_{E_+}(0)      & T & I  \\
       \mathcal{B}(\mathfrak{E})   &  & \mathfrak{X}
\end{pmatrix},
$$
where $T$ is defined by $Tf=\sqrt{2\pi}f(0)$ for all $f\in\mathcal{B}(\mathfrak{E})$.
\end{thm}
\begin{proof}
Due to $(\ref{Node})$, it is clear that $R_{E_+}(0)$ is a bounded operator on $\mathcal{B}(\mathfrak{E})$, and since $\mathcal{B}(\mathfrak{E})$ is a RKHS, $T$ is also a bounded operator from $\mathcal{B}(\mathfrak{E})$ to $\mathfrak{X}$. Now, for every $f,g\in\mathcal{B}(\mathfrak{E})$, using Cauchy integral formula, we can show that
\begin{align*}
\langle R_{E_+}(0)f,g\rangle_{\mathcal{B}(\mathfrak{E})}&-\langle f,R_{E_+}(0)g\rangle_{\mathcal{B}(\mathfrak{E})}\\
&=\int_{-\infty}^\infty\langle E_+^{-1}(t)\frac{f(t)-E_+(t)f(0)}{t},E_+^{-1}(t)g(t)\rangle_\mathfrak{X}~dt\\
&-\int_{-\infty}^\infty\langle E_+^{-1}(t)f(t),E_+^{-1}(t)\frac{g(t)-E_+(t)g(0)}{t}\rangle_\mathfrak{X}~dt\\
&=i\pi\langle f(0),g(0)\rangle_\mathfrak{X}+i\pi\langle f(0),g(0)\rangle_\mathfrak{X}\\
&=2i\pi\langle f(0),g(0)\rangle_\mathfrak{X}\\
&=\langle iT^*Tf,g\rangle_{\mathcal{B}(\mathfrak{E})}.
\end{align*}
Thus, $R_{E_+}(0)-(R_{E_+}(0))^*= iT^*T$, which implies $\Theta$ is an operator node. Now, suppose $f\in\cap_{n=0}^\infty\ker(T(R_{E_+}(0))^n)$. Then a simple calculation shows that $f^{(n)}(0)=0$ for all $n$, which implies $\cap_{n=0}^\infty\ker( T(R_{E_+}(0))^n)=\{0\}$. Thus, $\Theta$ is a simple operator node. Now, consider $\Omega=\{z\in\mathbb{C}:E_+(z)~\mbox{is invertible}\}$. Then, for any $z\in\Omega$, we can have the following inverse
\begin{equation}
[I-zR_{E_+}(0)]^{-1}=[I+zR_{E_+}(z)].\label{inverse}
\end{equation}
Now, a straightforward calculation shows the following identity
$$W_\Theta(z)=I+izT(I-zR_{E_+}(0))^{-1}T^*=E_+^{-1}(z)E_-(z)\hspace{.3cm}\mbox{for all}~z\in\Omega.$$
\end{proof}
An operator node 
$$\Theta=
\begin{pmatrix}
    A       & T & J  \\
    \mathfrak{H}       &  & \mathfrak{G}
\end{pmatrix}.
$$
is said to be a Volterra node if the basic operator $A$ is a Volterra operator (i.e., $A$ is compact and has the spectrum $\{0\}$) and the canal operator $T$ is compact. Given an operator $A\in B(\mathfrak{X})$, recall that the real and imaginary parts of this operator are respectively
$$\frac{A+A^*}{2}\hspace{.3cm}\mbox{and}\hspace{.3cm}\frac{A-A^*}{2i}.$$
Now, we mention a corollary of the previous theorem, which deals with the case when the operator node connected with $\mathcal{B}(\mathfrak{E})$ is a Volterra node.
\begin{cor}
If in the setting of Theorem \ref{Operator node connection}, we assume that $E_+(z)$ is invertible for all $z\in\mathbb{C}$, and $K_0(0)$ is a compact operator, then the operator node
$$\Theta=
\begin{pmatrix}
    R_{E_+}(0)      & T & I  \\
       \mathcal{B}(\mathfrak{E})   &  & \mathfrak{X}
\end{pmatrix},
$$
is a Volterra node.
\end{cor}
\begin{proof}
We need to show that $R_{E_+}(0)$ is a Volterra operator and $T$ is a compact operator. Due to $(\ref{inverse})$, it is clear that the spectrum of $R_{E_+}(0)$ is $\{0\}$. Since $TT^*=2\pi K_0(0)$ and $K_0(0)$ is compact, $T$ is also a compact operator. Then, the imaginary part of the basic operator $R_{E_+}(0)$ is $\frac{T^*T}{2}$, which is compact. Since a bounded operator on a Hilbert space is a Volterra operator, if its spectrum contains only zero and its imaginary part is compact (see \cite{Brodskii}, Theorem $10.1$), $R_{E_+}(0)$ is a Volterra operator.
\end{proof}

\section{Appendix}
In this appendix, we provide a detailed proof of Theorem \ref{AT}.

\begin{proof}[Proof of Theorem \ref{AT}]
 Since $A(z)$ is a Fredholm operator valued entire function and $A(z_0)$ is an invertible operator, Theorem \ref{FAT} implies that there exists a discrete set of complex numbers $D=\{z_1,z_2,\ldots\}$ such that $A(z)$ is an invertible operator for all $z\in\mathbb{C}\setminus D$. Since $D$ is a discrete set, $D$ can be a finite set or an infinite set such that $|z_n|\to \infty$ as $n\to\infty$. Now, we focus on proving the first factorization of $A(z)$ that appeared in (\ref{AE}). \\
 If $D$ is an empty set, the result follows with $P(z)=I$ and $E(z)=A(z)$. Otherwise, without loss of generality, we may assume that $z_1$ is the point in $D$ nearest to $z_0$. Since $A(z_1)$ is a Fredholm operator, $\mathrm{rng}A(z_1)$ is a closed subspace of $\mathfrak{X}$, and $(\mathrm{rng}A(z_1))^\perp$ is finite dimensional. Let $P_1$ be the orthogonal projection operator on  $(\mathrm{rng}A(z_1))^\perp$. Then 
 \begin{equation}
 \label{inv}
 \left[I-\frac{z-z_0}{z_1-z_0}P_1\right]^{-1}=\left[I-\frac{z-z_0}{z-z_1}P_1\right]
 \end{equation}
 is an operator valued analytic function on $\mathbb{C}\setminus\{z_1\}$. Also, using the fact that $P_1A(z_1)=0$, we have
 \begin{equation}
 \left[I-\frac{z-z_0}{z_1-z_0}P_1\right]^{-1}A(z)=[(z-z_1)-(z-z_0)P_1]\left[\frac{A(z)-A(z_1)}{z-z_1}\right]+A(z_1)
 \end{equation}
 is an operator valued entire function. This gives $A(z)=G_1(z)~E_2(z)$, where $G_1(z)$ as in Theorem \ref{product} and 
 \begin{equation}
    E_2(z):= \left\{
    \begin{array}{ll}
        \exp[-\frac{z-z_0}{z_1-z_0}P_1][(z-z_1-(z-z_0)P_1)[\frac{A(z)-A(z_1)}{z-z_1}]+A(z_1)]  , z \neq z_1 \\
         \exp(-P_1)[A(z_1)-(z_1-z_0)P_1A'(z_1)]  , z = z_1.
    \end{array} \right.
\end{equation}
It is easy to observe that $E_2(z)$ is a Fredholm operator valued entire function, and $E_2(z_0)=A(z_0)$ is an invertible operator. If $E_2(z)$ is invertible for all $z\in\mathbb{C}$, the proof of this part is complete, and $P(z)=G_1(z)$, $E(z)=E_2(z)$. Otherwise, let $z_2\in D$ nearest to $z_0$ such that $E_2(z_2)$ is not invertible and continue inductively. Now, at the n-th phase, $E_n(z)$ is a Fredholm operator valued entire function, and $E_n(z_0)$ is an invertible operator. Suppose $z_n\in D$ nearest to $z_0$ such that $E_n(z_n)$ is not an invertible operator. Again, taking the orthogonal projection $P_n$ on $(\mathrm{rng}E_n(z_n))^\perp$, we get $A(z)=G_n(z)E_{n+1}(z)$ for all $z\in\mathbb{C}$, where $G_n(z)$ as in Theorem \ref{product} and $E_{n+1}(z)$ is a Fredholm operator valued entire function and $E_{n+1}(z_0)$ is invertible. If $E_{n+1}(z)$ is invertible for all $z\in\mathbb{C}$, we may stop the inductive process and consider $P(z)=G_n(z)$, $E(z)=E_{n+1}(z)$. Otherwise, we will move on to the next phase. If $D$ is finite, this process will stop after finite steps, and we will get the desired factorization of $A(z)$.\\
Suppose $D$ is infinite, then $|z_n|\to\infty$ as $n\to\infty$ and 
\begin{equation}
A(z)=G_n(z)E_{n+1}(z)\hspace{.5cm}\mbox{for all}~z\in\mathbb{C}~\mbox{and}~n\in\mathbb{N}.\label{induction}
\end{equation}
 Also, all $P_n$ are finite rank orthogonal projections. Thus we can apply Theorem \ref{product} to obtain an operator valued entire function $P(z)$ such that $I-P(z)$ is compact for every $z\in\mathbb{C}$ and $P(z)$ is invertible for all $z\in\mathbb{C}
\setminus D$. Now, we want to show that $\{E_n(z)\}$ is a uniformly Cauchy sequence in any bounded set with respect to the operator norm. By construction, it is clear that, for every $n\in\mathbb{N}$
$$E_n(z)=B_n\exp(\tilde{B}_n)E_{n+1}(z),$$
where $B_n$ and $\tilde{B}_n$ are as defined in the previous theorem. Thus for any $m\leq n\leq s$, we have 
\begin{multline}
||E_m(z)-E_n(z)||\leq ||E_s(z)||\exp\left(\sum_{k=m}^s\frac{|\frac{z-z_0}{z_k-z_0}|^{k+1}}{1-|\frac{z-z_0}{z_k-z_0}|}\right)\\
-||E_s(z)||\exp\left(\sum_{k=n}^s\frac{|\frac{z-z_0}{z_k-z_0}|^{k+1}}{1-|\frac{z-z_0}{z_k-z_0}|}\right).
\end{multline}
This implies that $E(z)=\lim_{n\to\infty}E_{n+1}(z)$ exists uniformly on bounded sets with respect to the operator norm. From (\ref{induction}), we conclude that $A(z)=P(z)E(z)$. Now, we only need to show that $E(z)$ is invertible for all $z\in\mathbb{C}$.\\
By construction, it is clear that $E_n(z)$ has invertible values when $|z-z_0|<|z_n-z_0|$. Now
\begin{align*}
E_n(z)& = B_n\exp(\tilde{B}_n)E_{n+1}(z)\\
& = G_{n-1}^{-1}(z)G_n(z)E_{n+1}(z)\\
& = G_{n-1}^{-1}(z)P(z)E(z).
\end{align*}
Since $G_{n-1}^{-1}(z)P(z)$ is invertible when $|z-z_0|<|z_n-z_0|$, $E(z)$ is also invertible for all $z$ belonging to this disk. Since $|z_n|\to\infty$ as $n\to\infty$, $E(z)$ is invertible for all $z\in\mathbb{C}$.\\
The other factorization of $A(z)$ that appeared in (\ref{AE}) can be proved similarly as above. For clarification, let us mention the first factorization step. Let $Q_1$ be the orthogonal projection on $\ker A(z_1)$, which is of finite rank. Then
$$\left[I-\frac{z-z_0}{z_1-z_0}Q_1\right]^{-1}=\left[I-\frac{z-z_0}{z-z_1}Q_1\right]$$
and since $A(z_1)Q_1=0$,
$$A(z)\left[I-\frac{z-z_0}{z_1-z_0}Q_1\right]^{-1}=\left[\frac{A(z)-A(z_1)}{z-z_1}\right][(z-z_1)-(z-z_0)Q_1]+A(z_1).$$
The remaining steps can be done in an obvious way. This completes the proof.
\end{proof}


\vspace{.2in}
\noindent \textbf{Acknowledgements:} 
The authors sincerely thank the anonymous referees for their insightful and constructive suggestions. In particular, following the recommendation of one referee, Lemma $5.3$ has been improved, and Proposition $6.8$ has been added.
\\
The paper is dedicated to the memory of Professor Harry Dym. We take this opportunity to gratefully acknowledge him for carefully reading an early version of this paper and suggesting several improvements, especially for improving Lemma \ref{Pole} and Theorem \ref{Embedding}.

\vspace{.3cm}

\noindent \textbf{Funding:} 
The research of the first author is supported by the University Grants Commission (UGC) fellowship (Ref. No. DEC18-424729), Govt. of India.\\
The research of the second author is supported by the MATRICS grant of SERB (MTR/2023/001324).

\vspace{.3cm}

\noindent\textbf{Data availability:}\\
No data was used for the research described in the article.

\vspace{.3cm}

 \noindent\textbf{Declarations:}
\vspace{.3cm}

\noindent\textbf{Conflict of interest:}\\
The authors declare that they have no conflict of interest.

\vspace{.5in}

\end{document}